\numberwithin{equation}{section}
\theoremstyle{plain}
\newtheorem{theorem}{Theorem}[section]
\newtheorem*{theorem*}{Theorem}
\newtheorem{lemma}[theorem]{Lemma}
\newtheorem{proposition}[theorem]{Proposition}
\theoremstyle{definition}
\newtheorem{definition}{Definition}[section]
\theoremstyle{remark}
\newtheorem{remark}{Remark}
\renewcommand{\Re}{\operatorname{Re}}
\renewcommand{\Im}{\operatorname{Im}}
\newcommand{\supp}{\operatorname{supp}}
\newcommand{\bs}{\backslash}
\renewcommand{\le}{\left}
\newcommand{\ri}{\right}
\newcommand{\la}{\langle}
\newcommand{\ra}{\rangle}
\renewcommand{\mod}{\operatorname{mod}\ }
\newcommand{\cA}{\mathcal{A}}
\newcommand{\cC}{\mathcal{C}}
\newcommand{\cE}{\mathcal{E}}
\newcommand{\cF}{\mathcal{F}}
\newcommand{\cL}{\mathcal{L}}
\newcommand{\cM}{\mathcal{M}}
\newcommand{\cP}{\mathcal{P}}
\newcommand{\fa}{\mathfrak{a}}
\newcommand{\fb}{\mathfrak{b}}
\newcommand*{\bbC}{\ensuremath{\mathbb{C}}}
\newcommand*{\bbH}{\ensuremath{\mathbb{H}}}
\newcommand*{\bbR}{\ensuremath{\mathbb{R}}}
\newcommand*{\bbZ}{\ensuremath{\mathbb{Z}}}
\renewcommand{\d}{\delta}
\newcommand{\D}{\Delta}
\newcommand{\f}{\frac}
\newcommand{\g}{\gamma}
\newcommand{\G}{\Gamma}
\renewcommand{\l}{\lambda}
\newcommand{\s}{\sigma}
\newcommand{\ve}{\varepsilon}
\newcommand{\vp}{\varphi}
\begin{document}

\title[Sup-norm bounds for Eisenstein series]{Sup-norm bounds for Eisenstein series}
\author{Bingrong Huang and Zhao Xu}
\address{School of Mathematics \\ Shandong University \\ Jinan \\Shandong 250100 \\China}
\email{brhuang@mail.sdu.edu.cn}
\email{zxu@sdu.edu.cn}
\date{\today}

\begin{abstract}
  The paper deals with establishing bounds for Eisenstein series
  on congruence quotients of the upper half plane, with control
  of both the spectral parameter and the level. The key observation
  in this work is that we exploit better the structure of the
  amplifier by just supporting on primes for the Eisenstein series,
  which can use both the analytic method as Young did to get a
  lower bound for the amplifier and the geometric method as
  Harcos--Templier did to obtain a more efficient treatment
  for the counting problem.
\end{abstract}

\thanks{The first author is supported in part by the project of the National Natural Science Foundation of China (11531008), and the second author is supported by the project of the National Natural Science Foundation of China
(11501327). }
\keywords{sup-norm, Eisenstein series, trace formula, amplification}
\maketitle
\setcounter{tocdepth}{1}
\tableofcontents

\section{Introduction} \label{sec: Introduction}

Bounding the sup-norm of Laplace eigenfunctions on manifolds is a classical problem.
We shall establish new bounds for the well-studied modular surface $Y_0(q)=\G_0(q)\bs\bbH$
with its hyperbolic measure.

For Hecke--Maass cuspidal newforms $u_j$ of spectral parameter $t_j$ with $L^2$-normalized,
in the spectral aspect, the nontrivial bound is due to
Iwaniec--Sarnak \cite{iwaniec1995norms} (for level $q=1$)
and Blomer--Holowinsky \cite{blomer2010bounding} (for square-free level $q$) who established
\begin{equation}
  \|u_j\|_\infty \ll_{q,\ve} t_j^{5/12+\ve}
\end{equation}
for any $\ve>0$.
In the level aspect, the first non-trivial bound is due to Blomer--Holowinsky \cite{blomer2010bounding}
who proved that
\begin{align}
\|u_j\|_\infty \ll_{t_j} q^{-1/37},
\end{align}
for square-free $q$.
Then this is improved by Templier \cite{templier2010sup}, Harcos--Templier \cite{harcos2012sup,harcos2013sup},
and the current best result is
\begin{equation}
  \|u_j\|_\infty \ll_{t_j,\ve} q^{-1/6+\ve}.
\end{equation}
Hybrid bounds save a power simultaneously in the spectral and level aspects.
The following hybrid bound is established by Blomer--Holowinsky \cite{blomer2010bounding}
\begin{equation}
  \|u_j\|_\infty \ll t_j^{1/2} (t_j q)^{-1/2300},
\end{equation}
for square-free level $q$. In \cite{templier2015hybrid}, Templier obtains the following hybrid bound which
generalizes the best known bounds in the spectral and level aspects simultaneously
\begin{equation}\label{Templier}
  \|u_j\|_\infty \ll t_j^{5/12+\ve} q^{-1/6+\ve},
\end{equation}
for square-free level $q$.

It is natural to consider this problem for the continuous spectrum situation.
It seems to have been neglected until Young \cite[theorem 1.1]{young2015note}
establishes that for $\Omega$ a fixed compact subset of $\bbH$, and $T\geq 1$,
\begin{equation}\label{Young}
  \max_{z\in\Omega} |E(z,1/2+iT)| \ll_{\Omega,\ve} T^{3/8+\ve},
\end{equation}
where $E(z,s)$ is the usual real-analytic Eisenstein series for the group $PSL_2(\bbZ)$.
The Eisenstein series case is similar in some ways to the cuspidal case, but has some technical problems because
of the constant term in the Fourier expansion.
Let
\begin{equation}\label{eqn: F}
  F(z,s) = E(z,s) - y^{s} - \vp(s)y^{1-s},
\end{equation}
where
\begin{align}\label{eqn: vp(s)}
  \vp(s)=\xi(2(1-s))/\xi(2s),
\end{align}
with
\begin{align}\label{eqn: xi(s)}
  \xi(s)=\pi^{-s/2}\G(s/2)\zeta(s).
\end{align}
Although not stated explicitly in his main theorem, in \cite[Section 6]{young2015note},
Young actually derived that, for $z=x+iy\in\mathbb{H}$,
\[
  F(z,1/2+iT) \ll
  \left\{\begin{array}{ll}
    T^{3/8+\ve}, & \textrm{if } 1\ll y\ll T^{1/8}\  \textup{or}\  y\gg T^{1/4}, \\
    y^{1/3}T^{1/3+\ve}, & \textrm{if } T^{1/8}\ll y \ll T^{1/5}, \\
    (T/y)^{1/2}\log^2 T, & \textrm{if } T^{1/5}\ll y \ll T^{1/4}.
  \end{array}\right.
\]
The main impetus in Young's result is
the realization that one can choose an efficient amplifier for the Eisenstein series, which leads
to the improved exponent compared to the cusp form case (see \cite[Remark 1.6]{iwaniec1995norms}).

In this paper, we will give the bounds for the Eisenstein series
in a wider range of $y$, and then extend to bound the Eisenstein
series with square-free levels.
Our first main theorem is a generalization of Young's result.
\begin{theorem}\label{thm: level 1}
  Let $T\geq1$. For any $z\in \bbH$, we have
  \begin{equation}\label{eqn: E bound}
    E(z,1/2+iT) = y^{1/2+iT}+\varphi(1/2+iT)y^{1/2-iT} + O(y^{-1/2}+T^{3/8+\ve}).
  \end{equation}
\end{theorem}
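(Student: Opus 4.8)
The plan is to prove the theorem by combining the pre-trace formula (amplification method) for the full modular group with a careful treatment of the contribution of the constant term $y^s + \varphi(s)y^{1-s}$, following the strategy in Young's work but pushing the analysis to cover all $y$. Writing $F(z,s)$ as in \eqref{eqn: F}, the goal \eqref{eqn: E bound} is equivalent to the bound $F(z,1/2+iT) \ll y^{-1/2} + T^{3/8+\ve}$ valid for \emph{every} $z \in \bbH$. By modularity of $E(z,s)$, it suffices to prove this for $z$ in the standard fundamental domain, so $y \gg 1$; Young's three-range estimates already give what we want for $1 \ll y \ll T^{1/8}$ and $y \gg T^{1/4}$ (these are $\ll T^{3/8+\ve}$) and for $T^{1/5} \ll y \ll T^{1/4}$ (these are $\ll (T/y)^{1/2}\log^2 T \ll T^{3/8}$ once $y \gg T^{1/4}$... here one must be slightly careful, but in any case $(T/y)^{1/2} \ll y^{-1/2}\cdot T^{1/2}/y^{1/2}\cdot y$, and for $y$ in this range $(T/y)^{1/2} \ll T^{2/5} \not\ll T^{3/8}$) — so the genuinely new content is the middle ranges, where Young's bound $y^{1/3}T^{1/3+\ve}$ beats $T^{3/8+\ve}$ only for $y \ll T^{1/8}$, and we need to do better for $T^{1/8} \ll y \ll T^{1/4}$.

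**The key steps** are as follows. First I would set up the amplified pre-trace inequality: choose a test function $h$ with $h \geq 0$ on the relevant spectrum whose transform $k$ is supported on a ball of radius $\asymp 1/T$ around the relevant geodesic distance, and an amplifier $x_\ell$ supported on primes $\ell \sim L$ (as emphasized in the abstract, restricting to primes is what makes the amplifier efficient for Eisenstein series). Spectrally expanding $\sum_\ell x_\ell \lambda_\ell(E) \cdot \overline{F(z,s)}$ (the Hecke eigenvalue relation for Eisenstein series gives $\lambda_\ell(1/2+iT) = \ell^{iT}+\ell^{-iT}$, so $\sum_\ell x_\ell \lambda_\ell$ can be made $\gg L^{1/2-\ve}$ by a standard Dirichlet-polynomial / large-sieve lower bound — this is the ``analytic'' input). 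Geometrically, the same quantity is $\sum_{\ell_1,\ell_2} x_{\ell_1}\bar x_{\ell_2}\sum_{\gamma} k(z,\gamma z)$ over $\gamma$ with $\det = \ell_1\ell_2$, and we bound the number of such $\gamma$ moving $z$ a bounded hyperbolic distance. Here I would invoke the Harcos--Templier geometric counting lemma, which controls the number of lattice points $\gamma \in M_2(\bbZ)$ with $\det \gamma = n$ and $d(z,\gamma z)$ small, with the crucial feature that the count improves when $y$ is large (the parabolic/upper-triangular matrices contribute a term like $n^{\ve}(1 + \text{something}/y)$, while non-parabolic matrices are rarer). Balancing the gain from the amplifier length $L$ against the counting bound, with $L$ chosen as a power of $T$, yields $|F(z,1/2+iT)|^2 \cdot L^{1-\ve} \ll (\text{diagonal}) + (\text{off-diagonal})$, and optimizing reproduces the exponent $3/8$ for the bulk of the $y$-range, while for large $y$ the improved counting lets the contribution decay, giving the extra $y^{-1/2}$ term.

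**The main obstacle** I anticipate is the interface between the automorphic kernel and the constant term: the pre-trace formula naturally controls $E(z,s)$ itself (or its incomplete analogue), not the truncated $F(z,s)$, and subtracting $y^s + \varphi(s)y^{1-s}$ introduces a divergent/large contribution from the constant term of the kernel (the $\gamma$ ranging over upper-triangular matrices, which is exactly where the Eisenstein series differs from a cusp form). One must either work with a regularized kernel from which the constant-term piece has been removed by hand, or — following Young — split off the $y \gg T^{1/4}$ regime where $F$ is controlled directly by the rapid decay of the non-constant Fourier coefficients (incomplete Eisenstein series / unfolding estimates via the explicit Whittaker expansion), and use amplification only in the compact range. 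Making these two regimes overlap cleanly, and tracking the $\varepsilon$-powers and the precise dependence on $y$ through the counting lemma so that the final error is genuinely $y^{-1/2} + T^{3/8+\ve}$ uniformly, is the delicate bookkeeping that the proof must handle carefully.
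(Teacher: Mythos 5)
Your proposal follows essentially the same route as the paper: an amplifier supported only on primes of size $\sim N$, with the "analytic" lower bound for the amplifier value (Young's method, zero-free region / Vinogradov--Korobov input) combined with Templier--Harcos-style lattice-point counting split by conjugacy type, then Young's Fourier-expansion bound to dispose of the range $y\gg T^{1/4}$, and modularity to cover $y\ll 1$ (which is precisely where the $y^{-1/2}$ term in the error arises, via $\Im\gamma z\ll 1/y$ for the element $\gamma$ returning $z$ to the fundamental domain). Two small imprecisions worth flagging: the amplified pre-trace formula yields an $L^2$-bound in the spectral parameter, so one must in addition invoke Young's pointwise-to-integral reduction (Lemma~\ref{lemma: Bound via Integral}) before amplifying, a step your sketch glosses over; and the stated amplifier lower bound "$\gg L^{1/2-\ve}$" only matches the paper's $A_N(t,r)\asymp N$ under the $\ell^2$-normalization of $(x_n)$ — in the unnormalized form used in the paper the gain must be $\gg N^{2-\ve}$ against a diagonal term of size $\asymp N$, or else the exponent reverts to the Iwaniec--Sarnak $5/12$ rather than $3/8$.
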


\begin{remark}
  This result should be compared with Young's result \eqref{Young}.
  We actually get
  \begin{align}\label{remark for Thm1}
    E(z,1/2+iT)\ll T^{3/8+\ve},
  \end{align}
  provided that $T^{-3/4-\ve}\ll y\ll T^{3/4+\ve}$, which is a wider range.

\end{remark}

Now we turn to consider the Eisenstein series of level $q$,
where $q$ is a positive square-free integer.
Let $E_\fa(z,s)$ be the Eisenstein series for the congruence group $\G_0(q)$ (see \S \ref{subsec: ES}),
where $\fa$ is a cusp of $\Gamma_0(q)$. Our main result in this case is the following theorem.
\begin{theorem}\label{thm: level q}
  Let $z=x+iy\in\bbH$, $q$ be a positive square-free integer.
  Then for $T\geq 1$, we have
  \begin{equation}\label{eqn: E_a<<}
    E_\fa(z,1/2+iT) = \delta_\fa y^{1/2+iT} + \vp_{\fa}(1/2+iT)y^{1/2-iT} + O(q^{-1/2+\ve}(y^{-1/2}+T^{3/8+\ve})).
  \end{equation}
  In particular, if $\fa\sim\infty$, for all $y\gg1/q$, we have
  \begin{equation}\label{eqn: E_a<<p}
    E_\fa(z,1/2+iT) = y^{1/2+iT} + \vp_{\fa}(1/2+iT)y^{1/2-iT} + O(q^{-1/2+\ve}T^{3/8+\ve}).
  \end{equation}
\end{theorem}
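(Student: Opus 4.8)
The plan is to reduce Theorem \ref{thm: level q} to the level-one case (Theorem \ref{thm: level 1}, which may be assumed) via the explicit relationship between $E_\fa(z,s)$ for $\G_0(q)$ and the classical Eisenstein series, together with a pre-trace/amplification argument adapted to square-free level. First I would record the standard decomposition: for square-free $q$ the cusps of $\G_0(q)$ are parametrized by divisors of $q$, and each $E_\fa(z,s)$ can be written as a finite linear combination, with coefficients built from ratios of $\zeta$-factors and multiplicative arithmetic functions in $q$, of scaling-translates $E(\g z,s)$ of the level-one series (this is the content of \S\ref{subsec: ES}, which I am assuming). Subtracting the zeroth Fourier coefficient $\delta_\fa y^{s}+\vp_\fa(s)y^{1-s}$ leaves $F_\fa(z,s)$, the analogue of \eqref{eqn: F}, and the task becomes bounding $F_\fa(z,1/2+iT)$ by $q^{-1/2+\ve}(y^{-1/2}+T^{3/8+\ve})$.

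The engine is a pre-trace inequality on $\G_0(q)\bs\bbH$ with an amplifier supported on primes $\ell\nmid q$ in a dyadic range $\ell\sim L$. As in Young's argument and in Harcos--Templier, one chooses the test function $k(u)$ so that its Selberg/Harish-Chandra transform $h(t)$ concentrates near $t=\pm T$ of width $\approx 1$, giving on the spectral side a lower bound $\gg L |F_\fa(z,1/2+iT)|^2$ from the Hecke relations for Eisenstein series (the point, emphasized in the abstract and in Young's paper, is that restricting the amplifier to primes makes the Eisenstein Hecke eigenvalue $\lambda_\ell(1/2+iT)=\ell^{iT}+\ell^{-iT}$ easy to control and yields an efficient amplifier of length $\asymp L$). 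On the geometric side one gets
\[
  \sum_{\ell_1,\ell_2\sim L}\ \sum_{\g}\ \bigl|\text{weight}\bigr|\cdot |k(u(\g z,z))|,
\]
summed over $\g$ in the relevant $\G_0(q)$-double cosets, where $u(w,z)=|w-z|^2/(4\,\Im w\,\Im z)$. The diagonal term contributes the main part, and the off-diagonal is controlled by the Harcos--Templier counting estimate for the number of $\g=\begin{pmatrix}a&b\\c&d\end{pmatrix}$ with $q\mid c$ and prescribed size constraints on $u(\g z,z)$; here the $q\mid c$ condition gives the saving $q^{-1/2+\ve}$ after balancing against $L$.

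The delicate parts are the same two that make the Eisenstein case harder than the cusp-form case, now compounded by the level. First, the constant term: $F_\fa$ is not $L^2$, so one cannot invoke the full spectral expansion naively; instead I would follow Young's device of working with the truncated Eisenstein series (or equivalently inserting the pre-trace formula against the automorphic kernel and carefully subtracting the contribution of the continuous spectrum's constant terms), which is exactly what produces the constraint $y\gg 1/q$ in \eqref{eqn: E_a<<p} and the $y^{-1/2}$ term in \eqref{eqn: E_a<<}. Second, one must track the $y$-dependence through the counting: for $\g$ with small $c$ (including $c=0$, i.e. the stabilizer-type terms and the translates coming from the cusp $\fa$), the contribution must be matched against the zeroth coefficient one has already removed, and the Harcos--Templier "$c$ large vs.\ $c$ small" dichotomy has to be carried out uniformly in $y$ over the stated range. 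I expect the main obstacle to be precisely this bookkeeping of the constant-term subtraction in the presence of the level $q$ and multiple cusps --- making sure the error terms genuinely organize into $q^{-1/2+\ve}(y^{-1/2}+T^{3/8+\ve})$ and that the optimization in $L$ reproduces both the $T^{3/8}$ of the level-one bound and the extra $q^{-1/2}$ --- rather than any single estimate in isolation. Once the level-$1$ bound is in hand and the arithmetic of the cusps for square-free $q$ is made explicit, the passage to \eqref{eqn: E_a<<} and then the specialization $\fa\sim\infty$, $\delta_\fa=1$, $y\gg 1/q$ giving \eqref{eqn: E_a<<p} is routine.
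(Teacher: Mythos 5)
You have the right first move --- identify that each $E_\fa(z,s)$ for square-free $q$ is, via \eqref{eqn: E_a to E}, a short divisor sum over $\beta\mid v,\ \gamma\mid w$ of level-one Eisenstein series $E(\beta\gamma z,s)$ with an explicit prefactor, and that subtracting the zeroth Fourier coefficient reduces the task to bounding $F_\fa$. But you then abandon this reduction in favor of a second, level-$q$ amplification and lattice-point count, and that is not what the paper does nor what the exponent requires. The paper's proof of Theorem \ref{thm: level q} is essentially just the chain \eqref{eqn: F_a to F}--\eqref{eqn: F_a<}: write $F_\fa$ as the same divisor sum of $F(\beta\gamma z,s)$, note that on $\Re(s)=1/2$ the prefactor satisfies $\lvert \zeta_q(2s)\mu(v)(qv)^{-s}\rvert \ll (qv)^{-1/2}\log\log q$, apply the already-proved Theorem \ref{thm: level 1} to each $F(\beta\gamma z,1/2+iT)$ with $\Im(\beta\gamma z)=\beta\gamma y$, and sum the $O(q^\ve)$ terms. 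The $q^{-1/2+\ve}$ saving comes entirely from the size of this prefactor --- i.e.\ from the fact that all level-$q$ Eisenstein series (square-free $q$) are oldforms built from the level-one one, as the paper's own remark after Theorem \ref{thm: level q} emphasizes --- and not from a ``$q\mid c$'' congruence in a counting lemma. A genuine amplified pre-trace argument on $\G_0(q)\bs\bbH$ in the style of Harcos--Templier would at best recover a saving of strength $q^{-1/6+\ve}$ (the newform exponent) and is both unnecessary here and strictly weaker than the stated $q^{-1/2+\ve}$.

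You also misattribute the role of the hypothesis $y\gg 1/q$ in \eqref{eqn: E_a<<p}. It has nothing to do with truncating the Eisenstein series or with any constant-term bookkeeping inside a pre-trace identity; the constant term was already subtracted exactly, once, when passing from $E_\fa$ to $F_\fa$. Rather, the bound one gets for general $\fa$ is $F_\fa(z,1/2+iT)\ll (qv)^{-1/2+\ve}y^{-1/2}+q^{-1/2+\ve}T^{3/8+\ve}$, and in the case $\fa\sim\infty$ one has $v=q$, so the first term is $q^{-1+\ve}y^{-1/2}$, which is $\ll q^{-1/2+\ve}$ precisely when $y\gg 1/q$ and can then be absorbed into the $q^{-1/2+\ve}T^{3/8+\ve}$ term. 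In short: the correct idea is the one you open with, and it already finishes the proof in a few lines; the amplification machinery you describe afterwards is redundant, would not deliver the claimed exponent, and the two ``delicate points'' you flag do not in fact arise once the reduction to level one is carried through.
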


As in Iwaniec--Sarnak's work \cite{iwaniec1995norms}, we shall use the amplification method which
relates the sup-norm problem to an interesting lattice point counting.
The key observation in this work is that we exploit better the structure of the amplifier
by just supporting on primes for the Eisenstein series,
which can use both the analytic method as Young \cite{young2015note} did to get a lower bound for the amplifier
and the geometric method as Harcos--Templier \cite{harcos2013sup} and Templier \cite{templier2015hybrid} did to
obtain a more efficient treatment for the counting problem.

\begin{remark}
  We may compare the error term in \eqref{eqn: E_a<<p} with the current result of
  Hecke--Maass cusp forms in \eqref{Templier}. we can see that improved exponents both in spectral aspect and
  level aspect. This is done because of the choice of the amplifier, and the fact that all Eisenstein series of square-free level are oldforms coming from the full level Eisenstein series.
  However, by our construction of amplifier, one may expect that
  \begin{equation}
    \|u_j\|_\infty \ll t_j^{3/8+\ve} q^{-1/4+\ve},
  \end{equation}
  if we assume
  \begin{equation}\label{eqn: assumption}
    \sum_{p\sim N} |\lambda_j(p)|^2 \gg_\ve (qt_j)^{-\ve} N^{1-\ve},
  \end{equation}
  as \cite[Remark 1.6]{iwaniec1995norms} did.
\end{remark}

\begin{remark}
  For Hecke--Maass cuspidal newforms, one can restrict the sup-norm problem to the region $z\in \cF(q)$ (see Definition \ref{def: F(q)}) by
  the Atkin--Lehner theory which tells us that each Hecke--Maass cuspidal newform is an eigenfunction
  of the corresponding Atkin--Lehner operators.
  When it comes to Eisenstein series, we can replace the Atkin--Lehner theory for Hecke--Maass cuspidal newforms by
  explicit computations on Eisenstein series (see \eqref{eqn: A_q E_fa} and \eqref{eqn: A_q E_a}).
  So it is natural to consider the case $y\gg1/q$ (see \S\ref{subsec: AL}).
\end{remark}

We state that, throughout the paper, $\ve$ is an arbitrarily small positive number
which may not be the same in each occurrence.
The paper is organized as follows.
Section \ref{sec: Preliminaries} is devoted to the background on Eisenstein series,
establishment of the first reduction of Theorem \ref{thm: level 1},
and the construction of our amplifier.
Bounds via Fourier expansion and a pointwise bound for Eisenstein series
via its integral are also summarized in Section \ref{sec: Preliminaries},
which is proved by Young \cite[Section 4]{young2015note}.
In Section \ref{sec: lower bound for amplifier},
a lower bound for our amplifier is established.
And then after a summary of the results on counting lattice points
from Templier \cite{templier2015hybrid} in Section \ref{sec: counting},
we complete the proof of Theorem \ref{thm: level 1} in Section \ref{sec: thm1}.
In the last section, we give the proof of Theorem \ref{thm: level q}.

\section{Preliminaries} \label{sec: Preliminaries}

In this section, we collect those basic facts which are needed in this paper.
We restrict $q$ to be a positive square-free integer.
Let $\mathbb{H}$ be the upper half-plane, $\G=SL_2(\bbZ)$ the full modular group,
and $\G_0(q)$ the Hecke congruence group of level $q$.

Let $\cA(\G\bs\bbH)$ denote the space of automouphic functions of weight zero, i.e.,
the functions $f:\bbH\rightarrow\bbC$ which are $\G$-periodic.
Let $\cL(\G\bs\bbH)$ denote the subspace of square-integrable functions with respect to the inner product
\begin{equation}\label{eqn: inner product}
  \la f,g \ra = \int_{\G\bs\bbH} f(z)\overline{g(z)}d\mu z,
\end{equation}
where $d\mu z = y^{-2}dxdy$ is the invariant measure on $\bbH$.
The Laplace operator
\begin{equation}
  \D = -y^2\left(\frac{\partial^2}{\partial x^2} + \frac{\partial^2}{\partial y^2} \right)
\end{equation}
acts in the dense subspace of smooth functions in $\cL(\G\bs\bbH)$ such that $f$ and
$\D f$ are both bounded; it has a self-adjoint extension which yields the spectral
decomposition $\cL(\G\bs\bbH) = \bbC\oplus\cC(\G\bs\bbH)\oplus\cE(\G\bs\bbH)$. Here $\bbC$ is the space of constant
functions, $\cC(\G\bs\bbH)$ is the space of cusp forms and $\cE(\G\bs\bbH)$ is the space of
Eisenstein series. We will focus on $\cE(\G\bs\bbH)$.

\subsection{Eisenstein series for $\Gamma$ and $\Gamma_0(q)$} \label{subsec: ES}

The Eisenstein series $E(z,s)$ for $\Gamma$ is defined as
\begin{equation} \label{eqn: ES}
  E(z,s) = \sum_{\g\in\G_\infty\bs\G} (\Im \gamma z)^s,
\end{equation}
if $\Re(s)>1$, and then by analytic continuation for all $s\in\bbC$.
Here $\Gamma_\infty$ is the stability group of $\infty$.
For any integer $\ell\geq1$, the Hecke operator $T_\ell$ is defined by
\begin{equation} \label{eqn: HO}
  (T_\ell f)(z) = \frac{1}{\sqrt{\ell}}\sum_{ad=\ell}\sum_{b(\mod d)} f\left(\frac{az+b}{d}\right).
\end{equation}
Moreover, the reflection operator $R$ defined by $(Rf)(z)=f(-\bar{z})$.
$f$ is called even or odd according to $(Rf)(z)=f(z)$ or $(Rf)(z)=-f(z)$.
All the Eisenstein series $E(z,s)$ are even and they are eigenfunctions of the Hecke operators
\begin{equation} \label{eqn: ES as an eigenfunc of HO}
  T_\ell E(z,s) = \eta(\ell,s)E(z,s),
\end{equation}
where
\begin{equation} \label{eqn: eta}
  \eta(\ell,s) = \sum_{ad=\ell}(a/d)^{s-1/2}.
\end{equation}
We will write $\eta_{it}(\ell)=\eta(\ell,1/2+it)$.
And for any $m,n\in\bbZ_{>0}$, they enjoy the Hecke relation:
\begin{equation}\label{eqn: HR eta}
\eta_{it}(m)\eta_{it}(n)=\sum_{d|(m,n)}\eta_{it}\left(\f{mn}{d^2}\right).
\end{equation}
We have the Fourier expansion
\begin{equation}\label{eqn: FE}
  E(z,s) = y^{s} + \varphi(s)y^{1-s} + \frac{2\sqrt{y}}{\xi(2s)}\sum_{n\neq0}\eta(|n|,s-1/2)K_{s-1/2}(2\pi |n|y)e(nx).
\end{equation}

Now we consider the Eisenstein series for $\Gamma_0(q)$,
and we collect several statements about them from \cite{conrey2000cubic}.
The Eisenstein series $E_\fa(z,s)$ for $\G_0(q)$ at a cusp $\fa$ is defined as
\begin{equation} \label{eqn: E_a}
  E_\fa(z,s) = \sum_{\g\in\G_\fa\bs\G_0(q)} (\Im \s_\fa^{-1}\g z)^s,
\end{equation}
if $\Re(s)>1$ and by analytic continuation for all $s\in\bbC$.
Here $\G_\fa$ is the stability group of $\fa$ and $\s_\fa\in SL_2(\bbR)$ such that
$\s_\fa\infty=\fa$ and $\s_\fa^{-1}\G_\fa\s_\fa=\G_\infty$.
The scaling matrix $\s_\fa$ is only determined up to a translation from the right;
however the Eisenstein series does not depend on the choice of $\s_\fa$, not even on the choice of a cusp in the equivalence class.
We give some explicit computations on Eisenstein series.
We use the details on \cite{duke2002subconvexity}.
Recall that $q$ is square-free, so every cusp of $\G_0(q)$ is equivalent to $\fa=1/v$ with $v|q$.
The complementary divisor $w=q/v$ is the width of $\fa$.
(In the case $w=1$, so $v=q$, which means $\fa\sim \infty$.)
Let $\bar{\fa}$ stand for the cusp ``dual'' to $\fa$ in the sense that $\bar{\fa}\sim1/w$ if $\fa\sim1/v$.
The scaling matrix of $\fa$ can be chosen as
\begin{equation}\label{eqn: s_fa}
  \s_\fa = \begin{pmatrix} \sqrt{w}&0\\ v\sqrt{w}&1/\sqrt{w} \end{pmatrix}.
\end{equation}
Thus, we have
\begin{equation}\label{eqn: s_fa^-1 Gamma}
  \s_\fa^{-1}\G_0(q) = \left\{ \begin{pmatrix} a/\sqrt{w}&b/\sqrt{w}\\ c\sqrt{w}&d\sqrt{w} \end{pmatrix}: \begin{pmatrix} a&b\\ c&d \end{pmatrix}\in SL_2(\bbZ), c+av\equiv 0(vw) \right\}.
\end{equation}
The coset $\G_\infty\bs\s_\fa^{-1}\G_0(q)$ is parametrized by pairs of integers $(c,d)=1$, $c\equiv0\pmod{v}$ and $(c/v,w)=1$.
Therefore the Eisenstein series for the cusp $\fa\sim1/v$ is given by
\begin{equation}\label{eqn: explicit formula}
  E_\fa(z,s) = \frac{1}{2}\le(\frac{y}{w}\ri)^s \overset{*}{\sum_c\sum_d} |cvz+d|^{-2s},
\end{equation}
where $*$ means the summation is over $(c,d)\in\bbZ^2$ with $(c,d)=1,\ (c,w)=1$ and $(v,d)=1$.
Then, by trivial computation, we have
\begin{equation}\label{eqn: A_q E_fa}
  \begin{split}
     E_\fa\left(\frac{-1}{qz},s\right) & = \frac{1}{2}\le(\frac{y}{q|z|^2 w}\ri)^s \overset{*}{\sum_c\sum_d} \left|-\f{cv}{qz}+d\right|^{-2s} \\
       & = \frac{1}{2}\le(\frac{y}{v}\ri)^s \overset{*}{\sum_c\sum_d} |dwz-c|^{-2s} \\
       & = E_{\bar{\fa}}(z,s).
  \end{split}
\end{equation}
And by \cite[Eq. (3.25)]{conrey2000cubic}, we have the following explicit relation between $E_\fa(z,s)$ and $E(z,s)$,
\begin{equation}\label{eqn: E_a to E}
  E_\fa(z,s) = \zeta_q(2s)\mu(v)(qv)^{-s}
     \sum_{\beta|v}\sum_{\gamma|w}\mu(\beta\gamma)\beta^s\gamma^{-s}E(\beta\gamma z,s),
\end{equation}
where
\begin{equation}\label{eqn: zeta_q}
  \zeta_q(s) = \prod_{p|q}(1-p^{-s})^{-1}.
\end{equation}

The Fourier expansion of the Eisenstein series at cusp $\fa$ is known (\cite[Theorem 3.4]{iwaniec2002spectral})
\begin{equation}\label{eqn: FE of E_a}
  E_\fa(z,s) = \d_{\fa}y^{s} + \vp_{\fa}(s)y^{1-s} + \sum_{n\neq0}\vp_{\fa}(n,s)W_s(nz),
\end{equation}
where $\d_\fa=1$ if $\fa\sim\infty$ or $\d_\fa=0$ otherwise, and
\begin{equation}\label{eqn: vp_a(s)}
  \vp_\fa(s) = \pi^{1/2}\frac{\G(s-1/2)}{\G(s)} \sum_c \frac{S_\fa(0,0;c)}{c^{2s}},
\end{equation}
\begin{equation}\label{eqn: vp_a(n,s)}
  \vp_\fa(n,s) = \pi^s \G(s)^{-1} |n|^{s-1} \sum_c \frac{S_\fa(0,n;c)}{c^{2s}},
\end{equation}
with $S_\fa(0,n;c)=S_{\fa\infty}(0,n;c)$ and
\begin{equation} \label{eqn: Kloosterman sums}
  S_{\fa\fb}(m,n;c) := \sum_{\left(\begin{smallmatrix}a&*\\c&d\end{smallmatrix}\right)\in B\bs\s_\fa^{-1}\G\s_\fb/B} e\left(\frac{md+na}{c}\right),
\end{equation}
\begin{equation} \label{eqn: B}
  B = \left\{ \begin{pmatrix}1&b\\0&1\end{pmatrix}: b\in\bbZ \right\},
\end{equation}
and $W_s(z)$ is the Whittaker function given by
\begin{equation}\label{eqn: Whittaker function}
  W_s(nz) = 2 \sqrt{|n|y} K_{s-1/2}(2\pi|n|y)e(nx).
\end{equation}

Now, let us end this subsection with a very rough introduction on Hecke--Maass cuspidal forms for $\Gamma$.
Let $\{u_j\}$ be an orthonormal basis of the space of Maass cusp forms
for $\Gamma$ such that
$\Delta u_j(z)=\lambda_ju_j(z)$
and
$T_n u_j(z)=\lambda_j(n)u_j(z)$.
The Hecke eigenvalues $\lambda_j(n)$ also enjoy the Hecke relation:
\begin{equation}\label{eqn: HR lambda_j}
  \lambda_j(m)\lambda_j(n)
  = \sum_{d|(m,n)} \lambda_j\left(\frac{mn}{d^2}\right).
\end{equation}


\subsection{Atkin--Lehner operators and a gap principle} \label{subsec: AL}

We let $GL_2(\bbR)^+$ act on the upper-half plane $\bbH$ by the usual fractional linear transformations.
If $q$ is square-free, for each divisor $d|q$,
we consider the matrices $W_d\in M_2(\bbZ)$ of determinant $d$ such that
\begin{equation}
  W_d\equiv \begin{pmatrix} *&*\\ 0&* \end{pmatrix}\pmod{q} \quad {\rm and} \quad W_d\equiv \begin{pmatrix} 0&*\\0&0 \end{pmatrix} \pmod{d}.
\end{equation}
Scaling the $W_d$'s by $1/\sqrt{d}$ we obtain matrices in $SL_2(\bbR)$ which is called the \emph{Atkin--Lehner operators}.
The Atkin--Lehner operators together for all $d|q$ form a subgroup $A_0(q)$ of $SL_2(\bbR)$
containing $\Gamma_0(q)$ as a normal subgroup. The quotient group $A_0(q)/\G_0(q)$ is
isomorphic to $(\bbZ/2\bbZ)^{\omega(q)}$, where $\omega(q)$ is the number of distinct prime factors of $q$.

\begin{definition}\label{def: F(q)}
  Let $\cF(q)$ be the set of $z\in\bbH$ such that $\Im(z)\geq\Im(Az)$ for all $A\in A_0(q)$,
  which is a fundamental domain for $A_0(q)$.
\end{definition}

For $q$ being prime, it is easy to see that
$A_0(q) = \G_0(q)\cup A_q\G_0(q)$, where
\begin{equation}
  A_q=\frac{1}{\sqrt{q}}\begin{pmatrix} &-1\\q& \end{pmatrix}.
\end{equation}
By the definition of $E_\fa(z,s)$, we have
$E_\fa(\g z,s)=E_\fa(z,s)$,
if $\g\in\G_0(q)$. And by \eqref{eqn: A_q E_fa}, we have
\begin{equation} \label{eqn: A_q E_a}
  A_q E_\fa(z,s) = E_\fa\left(\frac{-1}{qz},s\right) = E_{\bar{\fa}}(z,s).
\end{equation}
Hence for $A\in A_0(q)$, we have $E_\fa(Az,s)=E_\fb(z,s)$, where $\fb=\fa$ if $A\in\G_0(q)$,
and $\fb=\bar{\fa}$ if $A\in A_q\G_0(q)$.

For Hecke--Maass cuspidal newform $u_j$, by Atkin--Lehner theory we know that it is an eigenvector
for the Atkin--Lehner operators with eigenvalues $\pm1$.
Therefore, we may assume that $z\in\cF(q)$ when investigating the sup-norm of a Hecke--Maass cuspidal newform.
By \eqref{eqn: A_q E_a}, we can still make the same assumption for Eisenstein series,
that is, $z\in\cF(q)$. Note that for $z\in \cF(q)$, by \cite[Lemma 2.2]{harcos2012sup}, we have
\begin{equation}
    \Im z \geq \frac{\sqrt{3}}{2q},
\end{equation}
and
$$
|cz+d|^2\geq 1/q,
$$
where $(c,d)\in \mathbb{Z}^2$ is distinct from $(0,0)$.


\subsection{Amplified pre-trace formula} \label{subsec: APTF}

Let $k\in\mathcal{C}^\infty([0,\infty))$ with rapid decay.
Then, it can be viewed as the inverse of the Selberg transform
of a function $h(t)$, which is given by the following three steps
(see \cite[(1.64)]{iwaniec2002spectral}:
\begin{equation*}
  \begin{split}
    g(\xi)&:=\f{1}{2\pi i}\int_{-\infty}^\infty e^{-ir\xi}h(r)dr, \\
    2q(v) &:=g(2\log(\sqrt{v+1}+\sqrt{v})),\\
    k(u)  &:=-\f{1}{\pi}\int_{u/4}^{\infty}(v-u/4)^{-1/2}dq(v).
  \end{split}
\end{equation*}
Assume now that $k(z,w)=k(u(z,w))$ is a point point-pair
invariant kernel
with
\begin{equation*}
  u(z,w):=\f{|z-w|^2}{\Im (z)\Im(w)},\ \ z,w\in\mathbb{H},
\end{equation*}
and $h(t)$ is the corresponding Selberg transform which satisfies the conditions (see \cite[(1.63)]{iwaniec2002spectral})
\begin{equation}\label{eqn: h(r) conditions}
  \left\{\begin{array}{l}
    h(t)  \textrm{ is even,}  \\
    h(t)  \textrm{ is holomorphic in the strip } |\Im t | < \frac{1}{2} + \varepsilon, \\
    h(t)  \ll (|t|+1)^{-2-\varepsilon}  \textrm{ in the strip.}
  \end{array}\right.
\end{equation}
Due to \cite[Theorem 7.4]{iwaniec2002spectral}, we have the spectral expansion
\begin{equation*}
  \begin{split}
    K(z,w): & = \sum_{\g\in\Gamma} k(\gamma z,w) \\
            & = \sum_{j\geq0}h(t_j)u_j(z)\overline{u_j(w)}
                + \frac{1}{4\pi} \int_{-\infty}^{\infty} h(r) E(z,1/2+ir)\overline{E(w,1/2+ir)}dr.
  \end{split}
\end{equation*}
Then it follows by applying the $\ell$th Hecke operator on the spectral expansion
that (cf. displays (1.3) and (1.4) in \cite{iwaniec1995norms})
\begin{equation} \label{eqn: pretrace formula}
  \begin{split}
    &\frac{1}{\sqrt{\ell}} \sum_{\g\in \cM(\ell)} k(\g z,z) \\
    =&\sum_{j\geq0}\l_j(\ell)h(t_j)|u_j(z)|^2 + \frac{1}{4\pi} \int_{-\infty}^{\infty} \eta_{ir}(\ell) h(r) |E(z,1/2+ir)|^2 dr,
  \end{split}
\end{equation}
where $\cM(\ell)$ is the set of matrices
$\g=\left(\begin{smallmatrix}a&b\\c&d\end{smallmatrix}\right) \in M_2(\bbZ)$
with $\det(\g)=\ell$.

Now we consider the following sum
\[
  \begin{split}
    \sum_{j\geq0}h(t_j)A_j |u_j(z)|^2 + \frac{1}{4\pi} \int_{-\infty}^{\infty}  h(r) A_{ir} |E(z,1/2+ir)|^2 dr,
  \end{split}
\]
where
\begin{equation}\label{eqn: A}
  A_j = \bigg|\sum_{n}x_n\l_j(n)\bigg|^2,\quad
  A_{ir} = \bigg|\sum_{n}x_n\eta_{ir}(n)\bigg|^2,
\end{equation}
and $h:\bbR\cup[-i/2,i/2]\rightarrow \bbR_+$ is a positive even smooth function
of rapid decay, $(x_n)$ is sequence of complex numbers supported on finitely many $n$'s.
After squaring out the $n$-sum and applying
the Hecke relations \eqref{eqn: HR eta} and \eqref{eqn: HR lambda_j}, we arrive at
\[
  \sum_{\ell}|y_\ell| \left|\sum_{j\geq0}\l_j(\ell)h(t_j)|u_j(z)|^2
  + \frac{1}{4\pi} \int_{-\infty}^{\infty} \eta_{ir}(\ell) h(r) |E(z,1/2+ir)|^2 dr\right|,
\]
where
\begin{equation}\label{eqn: y_ell}
  y_\ell := \sum_{\substack{d|(m,n)\\ \ell=mn/d^2}} x_m\overline{x_n}
  = \sum_{\substack{d\geq 1\\ \ell=\ell_1\ell_2}} x_{d\ell_1}\overline{x_{d\ell_2}}.
\end{equation}
By \eqref{eqn: pretrace formula}, it follows that
\begin{equation}\label{eqn: APTF}
  \begin{split}
    & \quad \sum_{j\geq0}h(t_j)A_j |u_j(z)|^2
          + \frac{1}{4\pi} \int_{-\infty}^{\infty}  h(r) A_{ir} |E(z,1/2+ir)|^2 dr \\
    & = \sum_{\ell} \frac{|y_\ell|}{\sqrt{\ell}} \sum_{\g\in \cM(\ell)} |k(\g z,z)|.
  \end{split}
\end{equation}
This identity is what we call the \emph{amplified pre-trace formula}.

To obtain upper bounds we use a test function $h(r)$
which is localized for $r$ near $T$, with $T\geq 2$ being a parameter.
We need a suitable point-pair kernel and the coming estimate.
\begin{lemma}\label{lemma: k_T}
  For all $T\geq1$, there is a point-pair kernel $k_T\in C_c^\infty([0,\infty))$, supported on
  $[0,1]$, which satisfies the following properties:
  \begin{itemize}
    \item [(i)]   The spherical transform $h_T(r)$ is positive for all $r\in \bbR\cup i\bbR$,
    \item [(ii)]  For all $T\leq r\leq T+1$, $h_T(r)\gg 1$,
    \item [(iii)] For all $u\geq 0$, $|k_T(u)|\leq T$,
    \item [(iv)]  For all $T^{-2}\leq u\leq 1$, $|k_T(u)|\leq \frac{T^{1/2}}{u^{1/4}}$.
  \end{itemize}
\end{lemma}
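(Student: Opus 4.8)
The plan is to construct $k_T$ explicitly via the inverse Selberg/Harish-Chandra transform, starting from a suitable choice of the test function $h_T$, rather than from $k_T$ itself. A natural and standard choice (essentially the one used by Iwaniec--Sarnak and refined by Harcos--Templier) is to take
\[
  h_T(r) = \left(\frac{\sin\left((r-T)\delta\right)}{(r-T)\delta}\right)^2 + \left(\frac{\sin\left((r+T)\delta\right)}{(r+T)\delta}\right)^2
\]
with $\delta \asymp 1$ (for the level-$1$ case one may even take $\delta = 1$), so that $h_T$ is even, entire, positive on $\bbR \cup i\bbR$ (each summand is a square of a real entire function that is real on $\bbR$ and on $i\bbR$), and satisfies $h_T(r) \gg 1$ for $T \le r \le T+1$. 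This immediately gives (i) and (ii). Note that this $h_T$ does \emph{not} decay fast enough to apply \eqref{eqn: h(r) conditions} directly; one either multiplies by a harmless fixed rapidly-decaying even factor $h_0(r)$ with $h_0 \equiv 1$ near the relevant range and $h_0 > 0$ on $\bbR \cup i\bbR$, or — as is standard — works with a normalized version and checks the formal manipulations survive. I will suppress this technicality, since it does not affect (i)--(iv).

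First I would pass to $k_T$ through $g_T(\xi) = \frac{1}{2\pi}\int_{-\infty}^\infty e^{-ir\xi} h_T(r)\,dr$. For the choice above, $g_T$ is (up to the modulation $e^{\pm iT\xi}$) a triangle/plateau function supported on $|\xi| \le 2\delta$; thus $g_T$ is supported on a fixed compact interval, $\|g_T\|_\infty \ll 1$, and $g_T$ has a jump in derivative but no jump in value. Tracking this through $2q_T(v) = g_T(2\log(\sqrt{v+1}+\sqrt{v}))$ and then
\[
  k_T(u) = -\frac{1}{\pi}\int_{u/4}^\infty (v-u/4)^{-1/2}\,dq_T(v),
\]
one sees that $q_T$ is supported on $v \ll 1$, so $k_T$ is supported on $u \ll 1$; after rescaling $u$ (replacing $k_T(u)$ by $k_T(cu)$ for a suitable fixed $c$) we may assume $\supp k_T \subseteq [0,1]$. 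This gives the support statement. For the size bounds, the key point is that the modulation factor $e^{\pm 2iT\log(\sqrt{v+1}+\sqrt{v})}$ oscillates at frequency $\asymp T/\sqrt{v}$ in $v$ near $0$, while the amplitude $q_T$ and its derivative are $O(1)$ on their support. Estimating the Abel-type integral trivially (bounding $|dq_T|$ by its total variation $O(1)$ and $(v-u/4)^{-1/2}$ by $2u^{-1/2}$ on, say, $u/4 \le v \le u/4 + 1$, but being careful since the integral is improper at the lower endpoint) yields $|k_T(u)| \ll u^{-1/2}\cdot(\text{length}) \ll$ a bound that we upgrade: splitting the $v$-integral at $v = u$ and integrating by parts once on the non-oscillatory-dominated piece to exploit the $T$-oscillation gives the gain. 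Carefully done, the trivial bound near $u \to 0$ gives $|k_T(u)| \ll T$ (property (iii), which is really just the $L^1$-normalization $\int h_T \ll T$ evaluated crudely), and the oscillatory estimate for $T^{-2} \le u \le 1$ gives $|k_T(u)| \ll T^{1/2} u^{-1/4}$ (property (iv)); the crossover $u \asymp T^{-2}$ is exactly where $T^{1/2}u^{-1/4} \asymp T$.

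The main obstacle is the clean derivation of the decay estimate (iv): one must carry out the stationary-phase / integration-by-parts analysis of $k_T(u) = -\frac{1}{\pi}\int_{u/4}^\infty (v - u/4)^{-1/2}\, dq_T(v)$ with the genuinely singular weight $(v-u/4)^{-1/2}$ at the lower limit and the $v$-dependent oscillation frequency $T/\sqrt{v}$, keeping uniformity in both $u$ and $T$ over the whole range $T^{-2} \le u \le 1$. This is where the $T^{1/2}/u^{1/4}$ shape emerges: the effective length of the oscillatory integral is $\asymp u$, the frequency is $\asymp T/\sqrt{u}$, and balancing the square-root singularity $\int_0^u t^{-1/2}\,dt \asymp u^{1/2}$ against the oscillatory gain $\asymp \sqrt{u}/T$ from one integration by parts produces precisely $T\cdot u^{1/2}/\sqrt{u} \cdot (\sqrt u /T)^{1/2}$-type bookkeeping that I would organize by a dyadic decomposition in $v$. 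Everything else — the properties (i), (ii) and the support and (iii) — follows directly from the explicit form of $h_T$ and standard properties of the Selberg transform, and this lemma (in essentially this form) already appears in the work of Harcos--Templier and Templier cited above, so I would also point the reader there for the verification of (iv).
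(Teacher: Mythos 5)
The paper's own proof of Lemma \ref{lemma: k_T} is the single line ``See Templier \cite[Lemma 2.1]{templier2015hybrid}'', so the comparison is really between your sketch and the argument in that reference. Your general outline --- take $h_T$ concentrated near $\pm T$, pass through $g_T$ and the Abel transform to get $k_T$, and bound the Abel-type integral by exploiting the $T$-oscillation --- is the right one, and your heuristic for the exponents in (iv) (frequency $\asymp T/\sqrt u$, effective length $\asymp u$, crossover with (iii) at $u\asymp T^{-2}$) is sound. However, two of the steps you dismiss as ``following directly from the explicit form of $h_T$'' fail for the Fejér-type $h_T$ you write down, and fixing them is not cosmetic.

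Positivity (i): for $T>0$ and $r=i\rho\in i\bbR$, the argument $\delta(i\rho-T)$ is a genuinely complex number, so $\sin\bigl(\delta(r-T)\bigr)/\bigl(\delta(r-T)\bigr)$ is \emph{not} real on $i\bbR$, and its square is not nonnegative. What is true is that the two summands in your $h_T$ are complex conjugates on $i\bbR$, so $h_T$ is real there; but $2\Re\bigl[(\sin z/z)^2\bigr]$ can certainly be negative, and nothing in the ``sum of squares'' structure gives positivity on $i\bbR$. Smoothness: the Fejér choice makes $g_T$ a modulated triangle function, Lipschitz but not $C^1$, and neither the passage to $q_T$ nor the Abel transform improves this, so the resulting $k_T$ is not in $C_c^\infty([0,\infty))$ as the lemma requires. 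The standard remedy, and the one used in \cite{templier2015hybrid}, handles both at once on the kernel side: start from a fixed nonnegative $k_0\in C_c^\infty$ supported near $0$, modulate it at spectral parameter $T$ to obtain $k_1$, and set $k_T=k_1\star k_1$ (self-convolution on the group). Then $k_T\in C_c^\infty([0,1])$ by construction, and $h_T=h_1^2\ge 0$ on all of $\bbR\cup i\bbR$ because the spherical transform of a real bi-$K$-invariant kernel is real on $\bbR\cup i\bbR$. Properties (iii)--(iv) are then established for this $k_T$ by essentially the oscillatory analysis you sketch, but one cannot simply assert (i) and the $C_c^\infty$ hypothesis for the Fejér $h_T$ and defer the rest.
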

\begin{proof}
  See Templier \cite[Lemma 2.1]{templier2015hybrid}.
\end{proof}

\begin{lemma}\label{lemma: int}
  Let $k_T$ be as in the above lemma. Let $M:[0,1]\rightarrow\mathbb{R}_+$ be a non-decreasing
  function with finitely many discontinuities such that $M(\delta)\ll \delta^\alpha$ for some
  $\alpha>0$. Then the following holds with $\beta:=\max(1/2,1-2\alpha)$:
  \begin{align*}
    \int_0^1 |k_T(\delta)| dM(\delta)
    \ll T^{\beta}.
  \end{align*}
\end{lemma}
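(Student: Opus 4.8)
The plan is to split the Riemann--Stieltjes integral at the scale $\delta=T^{-2}$ distinguished in Lemma~\ref{lemma: k_T} and to bound the two resulting pieces by properties (iii) and (iv) of that lemma respectively. Since $k_T\in C_c^\infty([0,\infty))$ is continuous and $M$ is non-decreasing, hence of bounded variation, the integral $\int_0^1|k_T(\delta)|\,dM(\delta)$ is well defined and additive over the two subintervals $[0,T^{-2}]$ and $[T^{-2},1]$; note also that $M(0)\ll 0^\alpha=0$, so $M$ contributes nothing at the left endpoint, and an atom of $M$ sitting exactly at $\delta=T^{-2}$ can be absorbed into either piece.

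For the inner piece I would use the uniform bound $|k_T(\delta)|\leq T$ from Lemma~\ref{lemma: k_T}(iii) together with $M(T^{-2})\ll(T^{-2})^\alpha=T^{-2\alpha}$, giving
\[
  \int_0^{T^{-2}}|k_T(\delta)|\,dM(\delta)\;\leq\;T\bigl(M(T^{-2})-M(0)\bigr)\;\ll\;T^{1-2\alpha}\;\leq\;T^\beta ,
\]
the last step since $\beta\geq 1-2\alpha$ by definition. For the outer piece I would invoke $|k_T(\delta)|\leq T^{1/2}\delta^{-1/4}$ on $[T^{-2},1]$ from Lemma~\ref{lemma: k_T}(iv) and pass from $dM$ to an ordinary integral by a dyadic decomposition: on a dyadic block $[\Delta,2\Delta]\subseteq[T^{-2},1]$,
\[
  \int_{[\Delta,2\Delta]}|k_T(\delta)|\,dM(\delta)\;\leq\;T^{1/2}\Delta^{-1/4}\bigl(M(2\Delta)-M(\Delta)\bigr)\;\ll\;T^{1/2}\Delta^{\alpha-1/4},
\]
and summing this essentially geometric bound over the $O(\log T)$ dyadic values $\Delta$ with $T^{-2}\leq\Delta\leq 1$ yields $\ll T^{1/2}$ when $\alpha>1/4$ (the sum being dominated by $\Delta\asymp 1$) and $\ll T^{1/2}(T^{-2})^{\alpha-1/4}=T^{1-2\alpha}$ when $\alpha<1/4$ (dominated by $\Delta\asymp T^{-2}$); both are $\ll T^\beta$. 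Adding the two pieces gives the asserted bound. An equivalent route is to replace $|k_T|$ by the smooth majorant $T^{1/2}\delta^{-1/4}$, integrate by parts in the Stieltjes integral, and estimate the resulting $T^{1/2}\int_{T^{-2}}^1 M(\delta)\,\delta^{-5/4}\,d\delta\ll T^{1/2}\int_{T^{-2}}^1\delta^{\alpha-5/4}\,d\delta$ by the same case split on $\alpha-5/4$ versus $-1$.

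The only delicate point is the borderline exponent $\alpha=1/4$: there $\beta=1/2$, the dyadic sum consists of $O(\log T)$ equal terms of size $T^{1/2}$, and one obtains only $\ll T^{1/2}\log T$; this logarithmic loss is harmless in the applications, where it is absorbed by the ubiquitous $T^\ve$ and where in fact the relevant value of $\alpha$ is bounded away from $1/4$. Hence the substantive content is purely bookkeeping: comparing $\alpha-1/4$ with $-1$ to decide which endpoint governs the $\delta$-integral, and checking that the winning exponent is exactly $\beta=\max(1/2,1-2\alpha)$. I expect this exponent accounting, together with justifying the passage from $dM(\delta)$ to $d\delta$, to be the main (indeed the only) obstacle.
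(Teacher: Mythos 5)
Your argument is correct and is the standard proof: the paper itself only cites Templier's Lemma~2.3, which proceeds by exactly this split at $\delta = T^{-2}$, applying property (iii) of Lemma~\ref{lemma: k_T} on $[0,T^{-2}]$ and property (iv) dyadically (equivalently, via integration by parts) on $[T^{-2},1]$, with the two exponents $1-2\alpha$ and $1/2$ emerging as you describe. Your observation that the borderline $\alpha = 1/4$ produces an extra $\log T$ is a genuine though cosmetic imprecision in the lemma as stated; it is immaterial here because the exponents of $\delta$ that actually enter via Proposition~\ref{prop: A} are $3/8$, $1/2$, and $1$ (all bounded away from $1/4$), and any stray logarithm would in any case be absorbed into the $(NT)^\ve$ in \eqref{eqn: upper bound for sum over ell}.
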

\begin{proof}
  See Templier \cite[Lemma 2.3]{templier2015hybrid}.
\end{proof}

%

Hence, by \eqref{eqn: APTF}, we have
\begin{equation}\label{eqn: Iwaniec-Sarnak}
  \int_{T}^{T+1}  A_{ir} |E(z,1/2+ir)|^2 dr
  \ll \sum_{\ell} \frac{|y_\ell|}{\sqrt{\ell}} \sum_{\g\in \cM(\ell)} |k_T(\g z,z)|.
\end{equation}
And now, we have three main problems to overcome to obtain a bound for $E(z,1/2+it)$.
The first problem is to relate a pointwise bound on $E(z,1/2+it)$ to an integral bound
of the type occurring in \eqref{eqn: Iwaniec-Sarnak}.
This has done by Young \cite{young2015note}, see Lemma \ref{lemma: Bound via Integral} below.
The second problem is to choose $x_n$ in order to make
the amplifier to be large on an integral of $t$'s of length
$T^{-\ve}$, but not simply be large at a single value of $t$.
The last problem is to give a good upper bound for the the right-hand
side of \eqref{eqn: Iwaniec-Sarnak}, that is, to count
lattice points efficiently.


\subsection{Summary of Young's results} \label{sec: Young}

We need the following modified version of Young \cite{young2015note}.
\begin{lemma}\label{lemma: Bound via FE}
  For $t\geq1$, and $y\gg1$, we have
  \begin{align*}
    F(z,1/2+it)\ll (t/y)^{1/2}\log^2 t+t^{1/6+\ve}.
  \end{align*}
\end{lemma}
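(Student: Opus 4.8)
The plan is to start from the explicit Fourier expansion \eqref{eqn: FE} and analyze the series $\frac{2\sqrt{y}}{\xi(2s)}\sum_{n\neq0}\eta(|n|,s-1/2)K_{s-1/2}(2\pi|n|y)e(nx)$ at $s=1/2+it$, exactly as Young does in \cite[Section 4]{young2015note}, keeping careful track of the $y$-dependence. Writing $F(z,1/2+it) = \frac{4\sqrt{y}}{|\xi(1+2it)|}\sum_{n\geq1}\eta_{it}(n)K_{it}(2\pi ny)\cos(2\pi nx)$ (using that $\xi(2s)$ on the critical line has absolute value comparable to $|\xi(1+2it)|$, and the standard lower bound $|\xi(1+2it)|^{-1}\ll t^{\ve}$ via the zero-free region / $\zeta(1+it)^{-1}\ll\log t$), we reduce to estimating $\sqrt{y}\sum_{n\geq1}\eta_{it}(n)K_{it}(2\pi ny)\cos(2\pi nx)$.

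The main step is to split the $n$-sum according to the behavior of the Bessel function $K_{it}(2\pi ny)$. For $2\pi ny \gtrsim t$ (i.e.\ $n \gtrsim t/y$), $K_{it}(2\pi ny)$ decays exponentially, so that part of the tail contributes $O(t^\ve)$ or better after using the divisor-like bound $\eta_{it}(n)\ll n^\ve$. For $2\pi ny \lesssim t$, one uses the uniform bounds for $K_{it}$ in the transitional and oscillatory ranges: roughly $K_{it}(x)\ll e^{-\pi t/2}\min\big((t^2-x^2)^{-1/4},\, t^{-1/3}\big)$ type estimates together with the fact that $e^{\pi t/2}/|\xi(1+2it)|$ and the normalization in \eqref{eqn: FE} combine to cancel the $e^{-\pi t/2}$. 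Summing $\eta_{it}(n)\ll n^{\ve}$ against these bounds over $n\ll t/y$ produces two regimes: a "short sum" bound of size roughly $t^{1/6+\ve}$ coming from the range where $ny$ is comparable to $t$ (the Airy/transition region, which gives the $t^{1/3}$-type saving and, multiplied by $\sqrt y$ and the count of $n$'s, the $t^{1/6}$ term), and a bound of size $(t/y)^{1/2}\log^2 t$ coming from the remaining oscillatory range where $ny \ll t$, where one bounds $K_{it}(2\pi ny)\ll (t^2-(2\pi ny)^2)^{-1/4}\ll t^{-1/2}(1 - (2\pi ny/t)^2)^{-1/4}$ and the sum over $n\ll t/y$ of $n^\ve (1-(ny/t)^2)^{-1/4}$ is $\ll (t/y) \cdot t^\ve$, giving $\sqrt{y}\cdot t^{-1/2}\cdot (t/y) = (t/y)^{1/2}$ up to logarithmic factors; the $\log^2 t$ accounts for the endpoint $ny$ close to $t$ and the divisor-sum averaging. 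Taking the maximum of the two regimes yields the stated bound.

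The hard part — or rather the part requiring care rather than ingenuity — is handling the transition region $2\pi ny \approx t$ uniformly: one must use the precise uniform asymptotics for $K_{it}$ near its turning point (Airy-type behavior, $K_{it}(t)\asymp e^{-\pi t/2} t^{-1/3}$), rather than crude bounds, in order to get the exponent $1/6$ rather than something larger, and one must confirm that the divisor-function weights $\eta_{it}(n)$ do not destroy this via a smooth-cutoff partial-summation argument. Since $y\gg1$ here, there is no contribution from small $n$ that would force a different treatment, so the argument is a genuine simplification of Young's Section 4 estimates specialized to $y\gg 1$. I would simply invoke Young's analysis for the Bessel-sum estimates and assemble the two regimes; no new phenomenon arises.
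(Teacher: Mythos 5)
Your plan is essentially the paper's proof: both reduce via Stirling's formula to the $K$-Bessel sum weighted by $d(n)$ and divided by $|\zeta(1+2it)|$, invoke Balogh's uniform asymptotics to split into oscillatory and Airy/transition ranges (and to use the exponential decay of $K_{it}$ for $ny\gg t$ to reduce to $y\ll t$, which is what turns the Airy contribution $y^{1/2}t^{-1/3+\ve}$ into $t^{1/6+\ve}$), and then defer the actual Bessel-sum estimate to Young's Lemma 3.1. One cosmetic slip worth fixing: $|\xi(1+2it)|^{-1}$ is \emph{not} $\ll t^\ve$ --- by Stirling it is $\asymp e^{\pi t/2}/|\zeta(1+2it)|$, so exponentially large --- but since you immediately account for the $e^{\pi t/2}$ cancellation against the $K$-Bessel decay, the argument you assemble is the correct one.
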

\begin{proof}
  The proof is similar to \cite[Lemma 3.1]{young2015note}.
  So a sketch proof is enough.
  By Stirling's formula,
  \begin{align*}
  F(z,1/2+it) \ll \f{\sqrt{y}}{|\zeta(1+2it)|} \sum_{n=1}^{\infty} d(n) |K_{it}(2\pi ny)|\cosh(\pi t/2).
  \end{align*}
  We have uniform bounds on the $K$-Bessel function which we extract from the uniform asymptotic expansions due to Balogh \cite{balogh1967asymptotic}:
  \begin{equation*}
    e^{\frac{\pi}{2}t}K_{it}(u) \ll \left\{
    \begin{array}{ll}
      t^{-1/4}(t-u)^{-1/4}, & \textrm{if } 0<u<t-Ct^{1/3},\\
      t^{-1/3}, & \textrm{if } |u-t|\leq Ct^{1/3},\\
      u^{-1/4}(u-t)^{-1/4} \exp\left(-c\left(\frac{u}{t}\right)^{3/2}\left(\frac{u-t}{t^{1/3}}\right)^{3/2}\right), & \textrm{if } u>t+Ct^{1/3},\\
    \end{array}
    \right.
  \end{equation*}
  where $c$ and $C$ are fixed constants.
  With the help of this, we can assume $y\ll t$,
  since the exponential decay of the $K$-Bessel function.
  The deduction of \cite[Lemma 3.1]{young2015note} gives us
  \begin{equation*}
    F(z,1/2+it)\ll (t/y)^{1/2}\log^2 t+y^{1/2}t^{-1/3+\ve}\ll (t/y)^{1/2}\log^2 t + t^{1/6+\ve}.
    \qedhere
  \end{equation*}
\end{proof}

We also need the following Lemma of Young \cite{young2015note}.
\begin{lemma}\label{lemma: Bound via Integral}
  Suppose $y,T\gg 1$. Then
  \begin{align*}
    |E(z,1/2+iT)|^2\ll y\log^6T+\log^5T\int_{|r|\leq 4\log T}|E(z,1/2+iT+ir)|^2 dr.
  \end{align*}
\end{lemma}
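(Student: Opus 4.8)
The plan is to follow Young's argument for \cite[Lemma 4.1]{young2015note}, whose idea is to convert a pointwise value of $E(z,1/2+iT)$ into an $L^2$-average over a short window of the spectral parameter by exploiting the fact that $E(z,1/2+ir)$ varies slowly in $r$ on scales of size $1/\log T$, together with a mean-value (large sieve type) bound that controls how large $|E(z,1/2+iT)|$ can be relative to its neighbours. First I would fix $z=x+iy$ with $y,T\gg 1$ and split $E(z,1/2+iT)=y^{1/2+iT}+\vp(1/2+iT)y^{1/2-iT}+F(z,1/2+iT)$ as in \eqref{eqn: F}. The two explicit terms contribute $O(y)$ to $|E(z,1/2+iT)|^2$ since $|\vp(1/2+iT)|=1$, which accounts for the $y\log^6 T$ term; so it suffices to bound $|F(z,1/2+iT)|^2$ by the claimed average.

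The heart of the matter is a "self-improving" inequality for $F$. Using the Fourier expansion \eqref{eqn: FE} one writes $F(z,1/2+iT)$ as a Dirichlet-type series $\sum_{n\ge 1} a_n(y) n^{-iT}$ with $a_n(y)=\frac{2\sqrt y}{\xi(1+2iT)}\eta(n,iT)(K_{iT}(2\pi ny)e(nx)+\text{c.c.})$-type coefficients; more robustly, one applies a smooth partition of unity and a standard device (integrating against a Fejér-type kernel supported on $|r|\le 4\log T$) to replace the value at $r=0$ by an average: for any fixed point $w=1/2+iT$ one has
\begin{equation*}
  |F(z,w)|^2 \ll \log^2 T \int_{|r|\le 2\log T} |F(z,w+ir)|^2\, dr + (\text{small error}),
\end{equation*}
provided $F$ does not oscillate too wildly, which is guaranteed because the relevant frequencies $\log n$ are effectively truncated (the $K$-Bessel factor forces $n\ll T^{1+\ve}/y$ up to negligible error, so $\log n\ll \log T$). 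Squaring, expanding $F=E-(\text{explicit})$, and absorbing the explicit terms' contribution $O(y\log^6 T)$ then yields the stated bound with $|E|^2$ in the integrand. The extra $\log T$ powers come from the length $\log T$ of the window and from Cauchy--Schwarz when passing between $F$ and the full $E$.

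I expect the main obstacle to be making the slow-variation step fully rigorous with explicit logarithmic savings rather than $T^\ve$: one must show that the "tail" of the Fourier expansion (the range $n\gg T^{1+\ve}/y$) is negligible using the exponential decay of $K_{iT}(2\pi ny)$ recorded in the proof of Lemma \ref{lemma: Bound via FE}, and that on the retained range the function $r\mapsto F(z,1/2+iT+ir)$ is close to a trigonometric polynomial of "degree" $O(\log T)$, so that a Bernstein/Fejér argument recovers the point value from the mean square. The factor $1/|\zeta(1+2iT)| \ll \log T$ (from the classical zero-free region) and Stirling for $\G$ also enter and are responsible for some of the log powers. Since all of these ingredients are exactly those used by Young, I would simply cite \cite[Section 4]{young2015note} for the details after indicating this reduction, as the authors do.
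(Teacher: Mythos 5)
Your proposal is correct and takes the same route as the paper, which disposes of this lemma simply by citing Young's Corollary 4.2, exactly as you propose to do after your sketch. Your description of the underlying mechanism (split off the explicit terms, view $T\mapsto F(z,1/2+iT)$ as effectively band-limited at scale $O(\log T)$, and recover the pointwise value from a local mean square by a Fej\'er/Bernstein-type argument) is a fair summary of Young's Section 4, though be aware that the $T$-dependence enters not only through the frequencies $\log n$ in $\eta_{iT}(n)$ but also through $K_{iT}(2\pi ny)$ and $1/\xi(1+2iT)$, so the reduction to a literal Dirichlet polynomial $\sum a_n(y)n^{-iT}$ with $T$-independent coefficients is not quite accurate and these extra $T$-dependent factors are part of what Young has to control.
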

\begin{proof}
  See Young \cite[Corollary 4.2]{young2015note}.
\end{proof}


\subsection{Amplifier} \label{sec: Amplifier}
It is convenient to give our amplifier now. We should modify the construction of Young \cite{young2015note}.
Let
\[
  \cP = \{ p \ {\rm prime}: N\leq p\leq 2N\}.
\]
be a large set of primes, and define
\begin{equation}\label{eqn: x_n}
  x_n := \left\{\begin{array}{ll}
                  w(n/N)\log(n)\eta_{it}(n), & {\rm if}\ n\in \cP, \\
                  0, &   {\rm otherwise},
                \end{array}\right.
\end{equation}
where $w$ is a fixed, compactly-support positive function on the positive reals, with
\begin{equation}\label{eqn: conditions of w}
  \supp(w)\subset[1,2],\quad 0\leq w(r)\leq 1, \quad
  {\rm and} \quad \int_{-\infty}^{\infty}w(r)dr \neq 0.
\end{equation}
Hence $y_\ell$ defined in \eqref{eqn: y_ell} satisfies:
\begin{equation}\label{eqn: upper bound for y_ell}
  y_\ell \ll \left\{\begin{array}{ll}
                      N, & \textrm{if } \ell=1,\\
                      \log^2 N, & \textrm{if } \ell=\ell_1\ell_2 \ \textrm{with}\ \ell_1,\ell_2\in \cP, \\
                      0, & \textrm{otherwise}.
                    \end{array}\right.
\end{equation}
The advantage of this amplifier can be seen in
Sections \ref{sec: lower bound for amplifier} and \ref{sec: counting}.




\section{A lower bound for the amplifier} \label{sec: lower bound for amplifier}

Define
\begin{equation}\label{eqn: A_N}
  A_N(t,r) = \sum_{p=2}^{\infty} w(p/N)\log(p)\eta_{it}(p)\eta_{ir}(p),
\end{equation}
where $\eta_{it}(n)$ is defined in \eqref{eqn: eta} and
$w$ is a fixed function satisfied to \eqref{eqn: conditions of w}.
Let
\begin{equation}\label{eqn: L(s)}
  L(s) = \frac{\zeta(s+it+ir)\zeta(s-it+ir)\zeta(s+it-ir)\zeta(s-it-ir)}{\zeta(2s)},
\end{equation}
for $\Re(s)>1$, where $\zeta(s)$ is the Riemann zeta-function.
Using a well-known identity of Ramanujan \cite[equation (15)]{ramanujan2000some} (see also \cite[equation (1.28)]{iwaniec2004analytic}), we derive
\[
  L(s) = \sum_{n=1}^\infty \frac{\eta_{it}(n)\eta_{ir}(n)}{n^s}.
\]
Then, by the Euler product of $L(s)$, we have
\begin{equation} \label{eqn: L'/L}
  \begin{split}
     - \frac{L'}{L}(s) & = \sum_{p}\log p \left(\f{p^{-s-it-ir}}{1-p^{-s-it-ir}}+\f{p^{-s+it-ir}}{1-p^{-s+it-ir}} \right. \\
                       &   \qquad \qquad \qquad \left. +\f{p^{-s-it+ir}}{1-p^{-s-it+ir}} + \f{p^{-s+it+ir}}{1-p^{-s+it+ir}}-\f{2p^{-2s}}{1-p^{-2s}} \right) \\
                       & = \sum_{n=1}^\infty \frac{b(n)}{n^s}, \quad \Re s>1,
  \end{split}
\end{equation}
say.
By the Taylor expansion, we derive that
\begin{equation}\label{eqn: b(n)}
  b(n) = \left\{\begin{array}{ll}
    \log(p)\eta_{it}(p)\eta_{ir}(p), & {\rm if}\ n=p,\\
    \log(p)b_{p,k}, & {\rm if}\ n=p^k,\ k\geq 2, \\
    0, & {\rm otherwise},
  \end{array}\right.
\end{equation}
with $|b_{p,k}|\leq 6$ for all $p$ prime and $k\geq 2$.
Define
\begin{equation}\label{eqn: B_N}
  B_N(t,r) = \sum_{n=1}^{\infty} w(n/N)b(n).
\end{equation}
By \eqref{eqn: conditions of w} and \eqref{eqn: b(n)}, we have
\begin{equation} \label{eqn: A_N vs B_N}
  A_N(t,r) = B_N(t,r)+O(\sqrt{N}).
\end{equation}
So we can estimate $A_N(t,r)$ by the estimation of $B_N(t,r)$.

\begin{lemma}\label{lemma: bound for amplier}
  Suppose that $\log N \gg (\log T)^{2/3+\d}$, and $t,r = T+O((\log N)^{-1-\d})$, for some fixed $\d>0$.
  Then
  \begin{equation}
    A_N(t,r) = 2N\widetilde{w}(1)(1+o(1)),
  \end{equation}
  where
  $
  \widetilde{w}(s)=\int_0^\infty w(y)y^{s-1}dy
  $
  is the Mellin transform of $w$.
\end{lemma}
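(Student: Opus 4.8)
The plan is to estimate $B_N(t,r) = \sum_{n\ge1} w(n/N) b(n)$ by contour integration, picking up the contribution of the pole of $-L'/L$ at $s=1$, and then transfer the estimate to $A_N(t,r)$ via \eqref{eqn: A_N vs B_N}. First I would write, by Mellin inversion,
\begin{equation*}
  B_N(t,r) = \frac{1}{2\pi i}\int_{(c)} \left(-\frac{L'}{L}(s)\right) \widetilde{w}(s) N^s\, ds
\end{equation*}
for some $c>1$, where $\widetilde{w}$ is the Mellin transform of $w$; since $w$ is smooth and compactly supported, $\widetilde{w}(s)$ is entire and decays faster than any polynomial in vertical strips. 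Next I would shift the contour to a line $\Re(s) = 1-\sigma$ with $\sigma = \sigma(T)$ a small positive quantity to be chosen (of size roughly $(\log T)^{-2/3-\d'}$, governed by the classical zero-free region for $\zeta$). The function $L(s) = \zeta(s+it+ir)\zeta(s-it+ir)\zeta(s+it-ir)\zeta(s-it-ir)/\zeta(2s)$ has, in the region under consideration, a simple pole at $s=1$ coming from the factor $\zeta(s-it+ir)$ when $t=r$; more precisely, for $t,r = T + O((\log N)^{-1-\d})$ the difference $t-r = O((\log N)^{-1-\d})$ is tiny, so the factor $\zeta(s - i(t-r))$ contributes a pole at $s = 1 + i(t-r)$, which sits within $o(\sigma)$ of $s=1$. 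The residue of $-\frac{L'}{L}(s)\widetilde{w}(s)N^s$ there is $\widetilde{w}(1+i(t-r))N^{1+i(t-r)}$ times the local data of $L$; the other three zeta factors $\zeta(1 \pm 2iT + \cdots)$ and $1/\zeta(2)$ stay bounded and bounded away from $0$ and $\infty$ on the Lindelöf-free classical region, and since $N^{i(t-r)} = 1 + o(1)$ and $\widetilde{w}(1+i(t-r)) = \widetilde{w}(1) + o(1)$ by continuity, the residue is $N\widetilde{w}(1)(1+o(1))$.

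The main work is then to show the shifted integral is $o(N)$. On the line $\Re(s)=1-\sigma$ one needs (a) the classical zero-free region $\Re(s) \ge 1 - c/\log(|\Im s|+2)$ for $\zeta$, which guarantees $L(s)$ is holomorphic there once $\sigma$ is chosen small enough relative to the heights $|\Im s \pm 2T|$ occurring — here the hypothesis $\log N \gg (\log T)^{2/3+\d}$ is exactly what is needed so that $N^{-\sigma} = \exp(-\sigma\log N)$ beats any power of $\log T$ while $\sigma$ is still inside the zero-free region at height $\asymp T$; and (b) the standard bound $\frac{L'}{L}(s) \ll (\log(|\Im s| + |T| + 2))^{A}$ for $s$ in the zero-free region, obtained from $\frac{\zeta'}{\zeta}(s) \ll \log^2(|\Im s|+2)$ there. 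Combining these with the rapid decay of $\widetilde{w}$ to control the tails $|\Im s|$ large (and the horizontal segments joining the two contours, where the same estimates apply), the shifted integral is $\ll N^{1-\sigma}(\log T)^{O(1)} = o(N)$, using $\exp(-\sigma \log N) = \exp(-c(\log T)^{\d})= o((\log T)^{-B})$ for every $B$. Thus $B_N(t,r) = N\widetilde{w}(1)(1+o(1))$, hence by symmetry of the four zeta factors one actually gets $2N\widetilde w(1)(1+o(1))$ — wait, let me reconsider: the factor $2$ comes from the fact that in $A_N$ the coefficient on primes is $\eta_{it}(p)\eta_{ir}(p)$, and the Dirichlet series $L(s)$ has its $s=1$ pole accounted once, but the normalization in the definition \eqref{eqn: A_N} and \eqref{eqn: B_N} together with $\widetilde w(1)=\int_0^\infty w(y)\,dy$ produces the stated constant; I would track the constant carefully through \eqref{eqn: A_N vs B_N}, noting the error $O(\sqrt N)$ there is negligible compared to $N$.

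The step I expect to be the main obstacle is the quantitative interplay between the shift parameter $\sigma$, the height $T$, and $\log N$: one must verify that the classical zero-free region is wide enough at height $\asymp T$ to accommodate a shift $\sigma$ for which simultaneously $N^{-\sigma}$ is smaller than every negative power of $\log T$. This is precisely where the hypothesis $\log N \gg (\log T)^{2/3+\d}$ enters and must be used with care — the exponent $2/3$ being the Vinogradov–Korobov-type shape, though here only the weaker classical region $1 - c/\log T$ is needed, which is comfortably implied. A secondary technical point is handling the pole location $s = 1 + i(t-r)$ rather than exactly $s=1$: one must ensure $|t-r| = O((\log N)^{-1-\d})$ is small enough that shifting past this pole is legitimate and that the residue's value differs from $\widetilde w(1) N$ only by $(1+o(1))$, which follows from continuity of $\widetilde w$ and $|N^{i(t-r)}-1| \ll |t-r|\log N \ll (\log N)^{-\d} = o(1)$.
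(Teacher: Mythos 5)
Your overall strategy---Mellin transform of $B_N$, contour shift past the pole near $s=1$, residue giving $N\widetilde w(1)$, tail controlled by the zero-free region---matches the paper's proof. However, there are two issues, one of which is a genuine gap.

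First, the claim that ``only the weaker classical region $1-c/\log T$ is needed'' is wrong, and fatally so for the lemma as stated. Shifting to $\Re(s) = 1 - c/\log T$ yields $N^{-\sigma} = \exp(-c\log N/\log T)$. The hypothesis only guarantees $\log N \gg (\log T)^{2/3+\d}$, which permits $\log N \asymp (\log T)^{2/3+\d}$ with $\d$ small, in which case $\log N/\log T \asymp (\log T)^{-1/3+\d}\to 0$ and hence $N^{-\sigma}\to 1$: the shifted integral is then $\asymp N$, not $o(N)$. The Vinogradov--Korobov zero-free region of width $\asymp(\log T)^{-2/3}(\log\log T)^{-1/3}$ is essential; the paper shifts to $\Re(s)=1-c/(\log T)^{2/3+\d/2}$ and uses the Vinogradov--Korobov bound $\zeta'/\zeta(s)\ll(\log|\tau|)^{2/3}(\log\log\tau)^{1/3}$ there (not the classical $\log^2$ bound you quote). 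The shape of the hypothesis $\log N\gg(\log T)^{2/3+\d}$ is precisely calibrated to this VK width, which is why you need it in full.

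Second, the factor of $2$: you only locate one pole near $s=1$, namely $s=1+i(t-r)$ from the factor $\zeta(s-it+ir)$, and then hesitate about where the second factor of $N\widetilde w(1)$ comes from. The correct explanation is the one you tentatively offered (``symmetry of the four zeta factors'') and then abandoned: the factor $\zeta(s+it-ir)$ produces a second simple pole at $s=1-i(t-r)$, also within $o(1)$ of $s=1$, contributing another residue $N^{1-i(t-r)}\widetilde w(1-i(t-r))=N\widetilde w(1)(1+o(1))$. The remaining two poles at $s=1\pm i(t+r)$ sit at height $\asymp 2T$ and are killed by the rapid decay of $\widetilde w$. Your alternative explanation via ``the coefficient on primes is $\eta_{it}(p)\eta_{ir}(p)$'' is a red herring: that structure is exactly what produced the four zeta factors in $L(s)$ in the first place, and the residue of the log-derivative at a simple pole of $L$ is automatically $1$, with no extra ``local data'' factor.
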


Fouvry, Kowalski, and Michel \cite[Lemma 2.4]{fouvry2014algebraic} prove a result with a similar conclusion,
but their method requires $N\gg T^3$. Young \cite[Lemma 5.1]{young2015note} also gives a similar result,
but his summation is over all integers. Our proof is analogous to the proof of \cite[Lemma 5.1]{young2015note}.

\begin{proof}
  Taking a Mellin transform and by \eqref{eqn: L(s)}and \eqref{eqn: L'/L}, we derive
  \[\begin{split}
    B_N(t,r) = & \frac{1}{2\pi i} \int_{(2)} N^s \widetilde{w}(s) \left( -\frac{L'}{L}(s) \right)ds\\
    = & \frac{1}{2\pi i} \int_{(2)} N^s \widetilde{w}(s)  \left( -\frac{\zeta'}{\zeta}(s+it+ir)-\frac{\zeta'}{\zeta}(s-it+ir)\right. \\
    & \qquad \qquad \qquad \quad\ \  \left. -\frac{\zeta'}{\zeta}(s+it-ir)-\frac{\zeta'}{\zeta}(s-it-ir)+\frac{\zeta'}{\zeta}(2s) \right)ds.
  \end{split}
  \]
  Next we move the contour to the left, to one along the straight line segments $L_1,L_2,L_3$ defined by
  $L_1=\left\{ 1-\frac{c}{(\log T)^{2/3+\d/2}}+i\tau: |\tau|\leq 100T \right\}$,
  $L_2=\left\{ 1+i\tau: |\tau|\geq 100T \right\}$, and the short horizontal segments
  $L_3=\left\{ \s\pm 100iT: 1-\frac{c}{(\log T)^{2/3+\d/2}}\leq \s \leq 1 \right\}$,
  where $c$ is a small positive number such that $L(s)$ is zero-free
  on the boundary and right side of $L_1\cup L_2\cup L_3$.
  By \cite[Theorem 8.29]{iwaniec2004analytic}, we may also assume
  the Vinogradov--Korobov bound $\zeta'(s)/\zeta(s)\ll (\log |\tau|)^{2/3}(\log\log \tau)^{1/3}$
  in this region.
  The integrals along the line segments $L_2$ and $L_3$ are trivially bounded by $O(T^{-100})$ by the rapid decay of $\widetilde{w}$.
  The new line $L_1$ gives an amount that is certainly
  \begin{equation*}
    \ll N (\log T) \exp\left( -\frac{\log N}{(\log T)^{2/3+2\d/3}} \right) \ll \frac{N}{(\log T)^{100}}.
  \end{equation*}

  Now we need to analyze the residue of the poles.
  The contribution from $s=1+it+ir$ and $s=1-it-ir$ is negligible
  because of the rapid decay of $\widetilde{w}(s)$.
  The residue at $s=1+it-ir$ contributes
  \begin{equation}
    R = N^{1+it-ir} \widetilde{w}(1+it-ir).
  \end{equation}
   Write $t=r+\eta$ (by assumption, $\eta=O((\log N)^{-1-\d})$), and using Taylor expansions, we have
  \[
    R = N^{1+i\eta} \widetilde{w}(1+i\eta) = N\widetilde{w}(1)(1 + O(|\eta|\log N)) = N\widetilde{w}(1)(1 + o(1)).
  \]
  By a similar argument for the residue at $s=1-it+ir$, we have
  \begin{equation*}
    B_N(t,r) = 2N\widetilde{w}(1)(1+o(1)).
  \end{equation*}
  Hence, by \eqref{eqn: A_N vs B_N}, we prove the lemma.
\end{proof}

\section{Counting lattice points} \label{sec: counting}

We need the counting results in Templier \cite{templier2015hybrid}, where
he counted lattice points respect to the level and the parameter.
Here, we restrict the level to be $1$.

For $z\in\bbH$, $\d>0$ and the integer $\ell$, let $\cM(z,\ell,\d)$ be the finite set of matrices $\g=\left(\begin{smallmatrix}a&b\\c&d\end{smallmatrix}\right)$ in $M_2(\bbZ)$ such that
\begin{equation}
  \det(\g)=\ell, \quad u(\g z,z)\leq \d.
\end{equation}
Denote by $M = M(z,\ell,\d)$ its cardinality.
We split the counting $M$ of matrices $\g=\left(\begin{smallmatrix}a&b\\c&d\end{smallmatrix}\right)$ as
\[
  M = M_* + M_u + M_p
\]
according to whether $c\neq0$ and $(a+d)^2\neq4\ell$ (generic), or $c=0$ and $a\neq d$ (upper-triangular),
or $(a+d)^2=4\ell$ (parabolic).

Now we should recall some results on counting lattice points in Templier \cite{templier2015hybrid}.
\begin{lemma}\label{lemma: M_*}
  For any $z=x+iy\in D$, and any integer $L$ and $0<\d<1$, we have
  \begin{equation}
    \sum_{1\leq \ell\leq L}M_*(z,\ell,\d) \ll L^\ve \le( \frac{L}{y} + L^{3/2}\d^{1/2} + L^2\d \ri).
  \end{equation}
\end{lemma}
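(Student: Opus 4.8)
**Proof proposal for Lemma 4.1 (bound for $\sum_{\ell\leq L} M_*(z,\ell,\d)$).**

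The plan is to parametrize the generic matrices $\g=\left(\begin{smallmatrix}a&b\\c&d\end{smallmatrix}\right)$ with $\det(\g)=\ell$, $c\neq 0$, $(a+d)^2\neq 4\ell$, by the quantity $u(\g z, z)$ and three integer parameters, following the classical approach of Iwaniec--Sarnak and the level-sensitive refinement of Harcos--Templier. Writing $z = x+iy$, one computes explicitly
\[
  u(\g z, z) = \frac{|cz^2 + (d-a)z - b|^2}{\ell y^2}.
\]
So the condition $u(\g z,z)\leq \d$ together with $\det \g=\ell$ pins down $\g$ by: the value of $c$, the value of the trace $a+d$, and the value of the quadratic form $Q_\g(z) := cz^2 + (d-a)z - b$ up to size $O(\ell^{1/2}\d^{1/2}y)$. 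First I would fix $c$ and the trace $\tau=a+d$ in their admissible ranges; the constraint $u\leq\d$ forces $|c|\ll \ell^{1/2}\d^{1/2}/y + \ell^{1/2}/y \cdot(\text{something})$ — more precisely $|c| y^2 \ll $ the real part bound — giving $|c|\ll L^{1/2}\d^{1/2} + L^{1/2}/y$ after summing $\ell\leq L$. Similarly $|\tau|\ll \ell^{1/2}$.

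Next, with $c$ and $\tau$ fixed, the value $w := Q_\g(z)$ is a complex number lying in a disk of radius $O((\ell\d)^{1/2}y)$, and the three remaining free integers $a,b,d$ (subject to $a+d=\tau$, $ad-bc=\ell$) are determined by $w$: indeed $d-a$ and $b$ are the "coefficients" read off from $w = cz^2 + (d-a)z - b$, so once $c,\tau,w$ are fixed there are $O(1)$ choices for $(a,b,d)$ (one uses that $z\notin\bbQ$ and the rigidity $\det\g = \ell$ to eliminate ambiguity, and the parabolic/scalar cases are excluded by hypothesis $(a+d)^2\neq 4\ell$). The count of admissible $w$ for fixed $c,\tau$ is then $\ll 1 + (\ell\d)^{1/2}y \cdot \big(\text{linear density}\big) + (\ell\d y^2)\cdot\big(\text{areal density}\big)$; here the linear and areal densities come from the two independent integer directions $d-a$ and $b$, with densities $1/y$ and $1$ respectively in the $w$-plane. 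Summing over the $O(L^{1/2}\d^{1/2} + L^{1/2}/y)$ values of $c$, the $O(L^{1/2})$ values of $\tau$, and then over $\ell\leq L$, and being careful to absorb the diagonal/boundary contributions, produces the three terms $L/y$, $L^{3/2}\d^{1/2}$, and $L^2\d$, up to the stated $L^\ve$.

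The main obstacle — and where the $L^\ve$ genuinely enters — is the bookkeeping when $c$ or $\tau$ is small, where the naive "$O(1)$ choices of $w$ per $(c,\tau)$" can degenerate: one must handle the ranges where $w$ itself is small (so the disk of radius $(\ell\d)^{1/2}y$ captures the lattice point $w=0$ or nearby points with multiplicity governed by divisor-type sums in $\ell$), and this is exactly where a divisor bound $d(\ell)\ll \ell^\ve$ is invoked. I expect the cleanest route is to quote the corresponding counting estimate of Templier \cite[Lemma 4.2 or its generic part]{templier2015hybrid} specialized to level $q=1$ and the fundamental domain $D$, verifying only that the geometry of $D$ (in particular $y\gg 1$ is \emph{not} assumed here, so the $L/y$ term must survive for small $y$) is compatible; the remaining work is the substitution $q=1$ and tracking that no spurious $q$-factors were used.
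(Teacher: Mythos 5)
The paper does not prove this lemma at all --- its ``proof'' is the single line ``See Templier~\cite[Lemma~1.3]{templier2015hybrid}'', and the authors made no attempt to reproduce Templier's argument. Your closing sentence, recommending to simply quote Templier's generic-matrix count at level $q=1$, is therefore exactly what the paper does, and that part is fine.

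Your preceding sketch of the underlying parametrization, however, has some imprecisions worth flagging if you wanted to make it self-contained. The way you derive the range of $c$ (``$|c|y^2 \ll$ the real part bound'') does not actually work: the real part of $cz^2+(d-a)z-b$ involves $b$ and $d-a$ as free parameters, so bounding it gives no control on $c$ alone. The correct mechanism is the lower bound
\[
u(\gamma z,z)\;\geq\;\left(\frac{|cz+d|}{\sqrt{\ell}}-\frac{\sqrt{\ell}}{|cz+d|}\right)^{2},
\]
which for $u\leq\delta<1$ forces $|cz+d|=\sqrt{\ell}\,(1+O(\delta^{1/2}))$, and then $|cz+d|\geq |c|y$ gives $|c|\ll\sqrt{\ell}/y$; combined with the bound $|2cx+d-a|\leq(\ell\delta)^{1/2}$ from the imaginary part, this also yields $|a+d|\ll\sqrt{\ell}$. (These are precisely the ingredients used in Harcos--Templier and in the proof of Lemma~\ref{lemma: M_p} of this paper.) Once $c$ and $a+d$ are fixed, $d-a$ is constrained to an interval of length $O((\ell\delta)^{1/2})$, and then $b$ lies in an interval of length $O((\ell\delta)^{1/2}y)$; summing and invoking the divisor bound gives the three displayed terms. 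Note also that your remark that ``$y\gg1$ is not assumed here, so the $L/y$ term must survive for small $y$'' is misguided: $z\in D$ entails $y\geq\sqrt3/2$, so $y\gg1$ always holds in this lemma; the $L/y$ term is there because it is the dominant term when $y$ is large (and $\delta$ is small), not because $y$ can be small.
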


\begin{proof}
  See \cite[Lemma 1.3]{templier2015hybrid}.
\end{proof}

Recall that $\cF(q)$ is the set of $z\in\bbH$ such that $\Im z\geq \Im Az$ for all $A\in A_0(q)$.
The original results of the following two lemmas in
Templier \cite{templier2015hybrid} were considered in the region $\cF(q)$.
In our case, the corresponding region $\cF(1)$ becomes the well-known fundamental domain of $SL_2(\mathbb{Z})$,
which is $D=\{z=x+iy\in\bbH:|x|\leq 1/2,|z|\geq 1\}$.

\begin{lemma}\label{lemma: M_u}
  For any $z=x+iy\in D$, and any integer $L$ and $0<\d<1$, the following estimate holds where $\ell_1,\ell_2$ run over primes:
  \begin{equation}
    \sum_{1\leq \ell_1,\ell_2\leq L} M_u(z,\ell_1\ell_2,\d) \ll L^\ve \le( L + L^3\d^{1/2}y \ri).
  \end{equation}
\end{lemma}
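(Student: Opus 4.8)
The plan is to parametrize the upper-triangular matrices explicitly and then count. An element $\g=\left(\begin{smallmatrix}a&b\\0&d\end{smallmatrix}\right)$ with $ad=\ell_1\ell_2$ and $a\neq d$ contributes to $M_u(z,\ell_1\ell_2,\d)$ when $u(\g z,z)\leq\d$. A direct computation gives
\begin{equation*}
  u(\g z,z) = \frac{|(a-d)z+b|^2}{ad\, y^2}\cdot\frac{ad}{1} \cdot \frac{1}{\ } \ \Longrightarrow\ u(\g z,z) = \frac{|(a-d)x+b|^2 + (a-d)^2 y^2}{\ell_1\ell_2\, y^2/(ad)}\ ,
\end{equation*}
so that after simplification the condition $u(\g z,z)\leq\d$ forces both $(a-d)^2 y^2 \ll \ell_1\ell_2\,\d$ and $|(a-d)x+b|^2 \ll \ell_1\ell_2\,\d$ (here I am using $ad=\ell_1\ell_2$). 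First I would fix the unordered factorization $\{a,d\}$ of $\ell_1\ell_2$. Since $\ell_1,\ell_2$ are primes, for each pair $(\ell_1,\ell_2)$ there are only $O(1)$ such factorizations (namely $\{1,\ell_1\ell_2\}$, $\{\ell_1,\ell_2\}$, $\{\ell_2,\ell_1\}$), so the divisor sum is harmless up to the $L^\ve$ already present. Having fixed $a,d$, the integer $|a-d|$ is determined, and the number of admissible $b$ is $O(1 + \sqrt{\ell_1\ell_2\,\d}\,)$ from the second inequality, while the first inequality $(a-d)^2\, y^2 \ll \ell_1\ell_2\,\d$ is a constraint that is either vacuous or restricts which factorizations are allowed.

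The main step is then to sum over $\ell_1,\ell_2\leq L$. Splitting according to whether $a=\ell_1\ell_2, d=1$ (or symmetrically) versus $\{a,d\}=\{\ell_1,\ell_2\}$: in the first case $|a-d|\asymp \ell_1\ell_2$ and the constraint $(a-d)^2 y^2 \ll \ell_1\ell_2 \d$ reads $(\ell_1\ell_2)^2 y^2 \ll \ell_1\ell_2\d$, i.e.\ $\ell_1\ell_2 \ll \d/y^2$, which is very restrictive; summing $1+\sqrt{\ell_1\ell_2\d}$ over this range gives $\ll L^\ve(\,\ldots\,)$ dominated by the $L^3\d^{1/2}y$-type term once one is careful, and more simply is absorbed. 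In the second case $a=\ell_1$, $d=\ell_2$ (with $\ell_1\neq\ell_2$), so $|a-d|\geq 1$, the constraint $(\ell_1-\ell_2)^2 y^2\ll L^2\d$ combined with $y\geq 1$ on $D$ forces $|\ell_1-\ell_2|\ll L\sqrt{\d}/y \ll L\sqrt\d$; for each such pair the number of $b$ is $\ll 1 + L\sqrt\d$. Counting the pairs $(\ell_1,\ell_2)$ with $|\ell_1-\ell_2|\ll L\sqrt\d/y$: there are $\ll L\cdot(1+L\sqrt\d/y)$ of them (choose $\ell_1$ freely, then $\ell_2$ in an interval of length $\ll L\sqrt\d/y$, noting that if $L\sqrt\d/y<1$ only $\ell_1=\ell_2$ survives, which is excluded, giving the "$+1$" degenerate contribution $\ll L$ from the diagonal-adjacent terms handled via the trivial $c=0,a\neq d$ bound). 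Multiplying the count of pairs by the per-pair count of $b$ gives $\ll L^\ve\bigl(L + L\cdot L\sqrt\d/y\cdot L\sqrt\d\bigr) = L^\ve(L + L^3\d\,/y)$. I would then need to reconcile the shape $L^3\d/y$ with the claimed $L^3\d^{1/2}y$; the resolution is that the correct accounting of the two inequalities (one controlling $|a-d|$, the *other* controlling $b$) couples them so that the product of ranges is $\ll \ell_1\ell_2\,\d$ rather than $(\ell_1\ell_2\d)\cdot\sqrt{\ell_1\ell_2\d}$, and re-doing the bookkeeping with $u(\g z,z)\le\d$ written as a single inequality on the $2$-dimensional lattice $\{(a-d,b)\}$ yields the stated bound; this reconciliation of exponents is the one place I would be most careful.

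The main obstacle, then, is not any deep input but rather getting the exact shape of the two-variable counting right: one must treat $(a-d,b)$ as a point of a rank-two lattice inside a region of area $\asymp \ell_1\ell_2\,\d\,/\,1$ (after clearing the $y$'s), apply the standard lattice-point count "number of points $\ll 1 + \text{area} + \text{perimeter}$'', and sum the resulting $1 + \sqrt{\ell_1\ell_2\d}\cdot(\text{something}) + \ell_1\ell_2\d$ over $\ell_1,\ell_2\leq L$ with primes, using only $y\geq 1$. Since this is exactly the specialization $q=1$ of Templier's computation, I would ultimately cite \cite[proof of Lemma~1.4 or its analogue]{templier2015hybrid} for the detailed bookkeeping, remarking only that on $D=\cF(1)$ the hypothesis $\Im z\geq\Im Az$ reduces to $y\geq 1$ and $|cz+d|^2\geq 1$, which is what makes the $L^3\d^{1/2}y$ term appear with a positive power of $y$ (coming from the constraint $(a-d)^2 y^2\ll \ell_1\ell_2\d$ being rearranged to bound the number of $b$ after $|a-d|$ is pinned down).
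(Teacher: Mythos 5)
Your structural plan is the right one (parametrize upper‑triangular $\gamma=\left(\begin{smallmatrix}a&b\\0&d\end{smallmatrix}\right)$ with $ad=\ell_1\ell_2$, split the condition $u(\gamma z,z)\leq\delta$ into a constraint on $a-d$ and a constraint on $b$, then sum over $\ell_1,\ell_2\leq L$ using that $a,d$ are determined up to $O(1)$ choices because $\ell_1,\ell_2$ are prime); this is indeed the argument behind \cite[Lemma~4.3]{templier2015hybrid} and \cite[Lemma~4]{harcos2013sup}, which the paper simply cites. However, the algebra at the crucial step is wrong, and the wrongness is exactly what produces the exponent mismatch you flag at the end. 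The clean computation is
\begin{equation*}
  u(\gamma z,z)=\frac{\bigl((a-d)x+b\bigr)^2+(a-d)^2y^2}{\ell_1\ell_2\,y^2},
\end{equation*}
so $u(\gamma z,z)\leq\delta$ gives \emph{(i)} $(a-d)^2\leq\delta\,\ell_1\ell_2$ (the $y^2$ cancels here, so there is \emph{no} factor of $y$ in the constraint on $a-d$) and \emph{(ii)} $|(a-d)x+b|\leq\sqrt{\delta\,\ell_1\ell_2}\,y$ (the factor of $y$ survives here, so the number of admissible $b$ for fixed $(a,d)$ is $\ll 1+L\sqrt{\delta}\,y$). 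You instead wrote $(a-d)^2y^2\ll\ell_1\ell_2\,\delta$ and $|(a-d)x+b|^2\ll\ell_1\ell_2\,\delta$, i.e.\ you put the $y$ on the $a-d$ side and dropped it from the $b$ side; both are incorrect, and since $y\geq\sqrt{3}/2$ on $D$ each of your two constraints is \emph{strictly stronger} than the true one, so the resulting count is not an upper bound for $M_u$ at all.

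Once the $y$'s are in the right place the final count goes through without any mystery and without needing the two‑dimensional ``single inequality on the lattice $(a-d,b)$'' device you gesture at. Taking $\{a,d\}=\{\ell_1,\ell_2\}$ with $\ell_1\neq\ell_2$ (the factorizations $\{1,\ell_1\ell_2\}$ are excluded by (i) since $\ell_1\ell_2-1>\sqrt{\delta\ell_1\ell_2}$ for $\delta<1$), constraint (i) gives $|\ell_1-\ell_2|\leq\sqrt{\delta\ell_1\ell_2}\leq L\sqrt\delta$, so the number of admissible pairs of primes is $\ll L\cdot L\sqrt\delta$ when $L\sqrt\delta\geq1$ and $0$ otherwise; combined with (ii) the total is
\begin{equation*}
  \ll L^{\varepsilon}\,L^2\sqrt\delta\,\bigl(1+L\sqrt\delta\,y\bigr)\ll L^{\varepsilon}\bigl(L^2\sqrt\delta+L^3\delta\,y\bigr)\ll L^{\varepsilon}\,L^3\delta^{1/2}y,
\end{equation*}
using $y\gg1$ and $\delta<1$. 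So the two inequalities do decouple cleanly, and the claimed bound falls out directly; there is no ``coupling of the two ranges'' to reconcile. Your proposed resolution in the last paragraph is therefore pointing in the wrong direction: the issue is not that the $(a-d,b)$ region needs to be treated jointly, but that the $y$-factors were placed on the wrong sides of the two one‑dimensional inequalities. I would also note a small slip: the citation in the paper for this lemma is \cite[Lemma~4.3]{templier2015hybrid} (not Lemma~1.4), and on $D$ one has $y\geq\sqrt{3}/2$ rather than $y\geq1$, although this does not affect the outcome.
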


\begin{proof}
  See \cite[Lemma 4.3]{templier2015hybrid}.
\end{proof}

\begin{remark}
  The original proof of Lemma \ref{lemma: M_u} comes from \cite[Lemma 4]{harcos2013sup}, where three results were stated
  according to $\ell=\ell_1\ell_2$, $\ell=\ell_1\ell_2^2$, or $\ell=\ell_1^2\ell_2^2$.
  We remark that there is an extra condition which states that
  $L\leq q^{O(1)}$, where $q$ is the level. However, in our case ($\ell=\ell_1\ell_2$), this condition is not needed in the proof.
  So we remove it here.
\end{remark}

\begin{lemma}\label{lemma: M_p}
  For any $z=x+iy\in D$, we have
  \begin{equation}
    M_p(z,\ell,\d) \ll (1+\ell^{1/2}\d^{1/2}y+\ell^{3/4}\d^{3/8}y^{-1/2}) \d_\square(\ell),
  \end{equation}
  where $\d_\square(\ell) = 1,0$ depending on whether $\ell$ is a perfect square or not.
\end{lemma}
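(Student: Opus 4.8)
The plan is to estimate the number $M_p(z,\ell,\delta)$ of parabolic matrices $\gamma=\left(\begin{smallmatrix}a&b\\c&d\end{smallmatrix}\right)\in M_2(\bbZ)$ with $\det\gamma=\ell$, $(a+d)^2=4\ell$, and $u(\gamma z,z)\leq\delta$, for $z=x+iy\in D$. The parabolic condition $(a+d)^2=4\ell$ already forces $\ell$ to be a perfect square, say $\ell=m^2$, with trace $a+d=\pm 2m$; this explains the factor $\d_\square(\ell)$, and from now on we fix the sign and assume $\ell=m^2$. After conjugating by the translation $z\mapsto z+x$ we may as well take $x=0$ (this only shifts $b$ by an integer amount depending on $a,c,d$ and does not change the count), so it suffices to bound the number of such $\gamma$ with $u(\gamma(iy),iy)\leq\delta$.

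First I would write out $u(\gamma z,z)$ explicitly. With $\det\gamma=\ell$ one has the standard identity
\begin{equation*}
  u(\gamma z,z)=\frac{|cz^2+(d-a)z-b|^2}{\ell\,y^2}=\frac{|c|^2|z|^4+\cdots}{\ell y^2},
\end{equation*}
and at $z=iy$ this becomes $u=\big(c^2y^4+(d-a)^2y^2+2cby^2\cdot(\text{something})+b^2\big)/(\ell y^2)$ after expanding $|{-}cy^2+(d-a)iy-b|^2 = (b+cy^2)^2+(d-a)^2y^2$. So the condition $u\leq\delta$ is equivalent to
\begin{equation*}
  (b+cy^2)^2+(d-a)^2y^2\leq \ell\delta y^2.
\end{equation*}
This immediately gives $(d-a)^2\leq \ell\delta$, i.e.\ $|d-a|\leq \ell^{1/2}\delta^{1/2}$, so there are $O(1+\ell^{1/2}\delta^{1/2})$ choices for $d-a$; combined with $a+d=\pm2m=\pm2\ell^{1/2}$ this pins down $a$ and $d$ up to $O(1+\ell^{1/2}\delta^{1/2})$ choices (a parity check is harmless). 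Next, $(b+cy^2)^2\leq\ell\delta y^2$ gives $|b+cy^2|\leq\ell^{1/2}\delta^{1/2}y$. For each fixed $c$ this leaves $O(1+\ell^{1/2}\delta^{1/2}y)$ choices for $b$, but one must still determine the admissible range of $c$. Here one uses the remaining constraint $ad-bc=\ell$: since $a,d$ are determined and $|a|,|d|\ll \ell^{1/2}$ (as $a+d=\pm2\ell^{1/2}$ and $|a-d|$ is small), we get $bc=ad-\ell = O(\ell^{1/2}\cdot\ell^{1/2}\delta^{1/2})=O(\ell\delta^{1/2})$ — more precisely, writing $a=m+s$, $d=m-s$ with $|s|\ll\ell^{1/2}\delta^{1/2}$ gives $ad-\ell = m^2-s^2-m^2=-s^2$, so $bc=-s^2$ with $|s^2|\ll \ell\delta$. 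If $s\neq0$ this forces both $b\neq0$ and $c\neq0$ and the divisor bound gives $\ell^\ve$ factorizations $bc=-s^2$; but this clashes with the counting of $b$ by range, so the cleaner route is: from $|b+cy^2|\leq\ell^{1/2}\delta^{1/2}y$ and $bc=-s^2$ one eliminates $b=-s^2/c$ (when $c\neq0$) to get a quadratic-type inequality $|c^2y^2-s^2|\leq\ell^{1/2}\delta^{1/2}|c|y$ in $c$, whose number of integer solutions is $O(1+\ell^{1/4}\delta^{1/4}y^{-1/2}\cdot(\ell^{1/2}\delta^{1/2}y)^{1/2})=O(1+\ell^{3/8}\delta^{3/8}\cdot(\text{stuff}))$; and the case $s=0$ (so $a=d=m$, $bc=0$) is handled separately and contributes $O(1+\ell^{1/2}\delta^{1/2}y)$ from the $c=0$ strip in $b$ or $O(\ell^{1/4}\delta^{1/4}y^{-1/2})$ otherwise. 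Multiplying the count for $(a,d)$ by the count for $(c,b)$ and collecting terms yields $M_p(z,\ell,\delta)\ll(1+\ell^{1/2}\delta^{1/2}y+\ell^{3/4}\delta^{3/8}y^{-1/2})\d_\square(\ell)$, as claimed.

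The main obstacle is the bookkeeping in the last step: one must balance the two descriptions of the solution set — counting $b$ within an interval of length $\ll\ell^{1/2}\delta^{1/2}y$ versus counting $c$ as a near-solution of $c^2y^2\approx s^2$ — and choose, for each size of $y$ relative to $\ell^{1/4}\delta^{1/8}$, the more economical one so that the product never exceeds the stated bound; the term $\ell^{3/4}\delta^{3/8}y^{-1/2}$ is exactly what comes out of optimizing $(\ell^{1/2}\delta^{1/2})\times(\text{count of }c)$ against $(\text{count of }a,d)\times(\ell^{1/2}\delta^{1/2}y)$ at the crossover. This is precisely the Harcos–Templier style of parabolic count, and since Templier \cite{templier2015hybrid} records exactly this lemma, the argument is the specialization of his to the case at hand; I would simply cite \cite[Lemma 4.4]{templier2015hybrid} (or the relevant parabolic estimate there) after checking the normalizations agree, and present the short computation above only if a self-contained proof is wanted.
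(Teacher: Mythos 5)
The proposed reduction to $x=0$ is not valid, and this is where the argument breaks. Conjugating $\gamma=\left(\begin{smallmatrix}a&b\\c&d\end{smallmatrix}\right)$ by the real translation $T_{-x}$ gives
\begin{equation*}
  T_{-x}\gamma T_x=\begin{pmatrix}a-cx& b+(a-d)x-cx^2\\ c& d+cx\end{pmatrix},
\end{equation*}
whose diagonal entries are not integers when $c\neq0$ and $x\notin\bbZ$ (and for $z\in D$ one has $|x|\leq 1/2$, so generically $x\neq0$). It is therefore false that the conjugation ``only shifts $b$ by an integer amount''. Working with the actual $z=x+iy$, the imaginary part of $az+b-cz^2-dz$ gives the constraint $|a-d-2cx|\leq\ell^{1/2}\delta^{1/2}$, not $|a-d|\leq\ell^{1/2}\delta^{1/2}$: the quantity you must localize is coupled to $c$. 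Consequently your plan of first fixing $(a,d)$ in $O(1+\ell^{1/2}\delta^{1/2})$ ways and then counting $(b,c)$ via $bc=-s^2$ does not decouple; you cannot enumerate $s=(a-d)/2$ without reference to $c$. (Even granting $x=0$, the last step --- ``multiplying the count for $(a,d)$ by the count for $(c,b)$'' after the elimination $b=-s^2/c$ --- is left as a sketch and is not actually carried out; the constraints interact and one must sum over $s$, not multiply.)

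The paper's proof goes in the opposite order and avoids this issue. It first derives, from the real and imaginary parts of $u(\gamma z,z)\leq\delta$ together with $|a+d|=2\ell^{1/2}$, the bounds $|cz+d|=\ell^{1/2}(1+O(\delta^{1/2}))$ and $|cx+d|=\ell^{1/2}(1+O(\delta^{1/2}))$, whence $c^2y^2\ll\ell\delta^{1/2}$, i.e.\ $|c|\ll\ell^{1/2}\delta^{1/4}/y$. Then, for each fixed $c\neq0$, the parabolic identity $(a-d)^2+4bc=0$ forces $a-d$ to be divisible by an integer at least $c^{1/2}$, so only $\ll 1+\ell^{1/2}\delta^{1/2}/c^{1/2}$ choices of $a-d$ fall in the admissible window of length $\ll\ell^{1/2}\delta^{1/2}$; summing over $c$ produces the $\ell^{3/4}\delta^{3/8}y^{-1/2}$ term. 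The $c=0$ case is treated separately (then $a=d=\pm\ell^{1/2}$ and $b^2\leq\ell\delta y^2$) and gives the $\ell^{1/2}\delta^{1/2}y$ term.

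Finally, the suggested fallback of simply citing \cite[Lemma 4.4]{templier2015hybrid} is exactly what the paper deliberately does \emph{not} do: as explained in Remark~\ref{rmk: BHMM}, the proof of that stronger bound was called into question in \cite{blomer2016sup-norm}, and the lemma here is a weaker but self-contained replacement in the spirit of \cite[Lemma 13]{blomer2016sup-norm}.
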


\begin{remark} \label{rmk: BHMM}
  A better bound could be found in \cite[Lemma 4.4]{templier2015hybrid}.
  However, in the recent preprint \cite[Remark 5]{blomer2016sup-norm}, the authors explained
  that it remained unclear to them to get this better bound. Instead, they provides
  an alternative bound of a number field version. Here, for completeness, we use
  their technique of \cite[Lemma 13]{blomer2016sup-norm} to give this result which is weaker
  than \cite[Lemma 4.4]{templier2015hybrid} but sufficient for us.
\end{remark}

\begin{proof}
  Note that $\ell$ is always a perfect square.
  Firstly, we consider the case $c=0$.
  By $(a+d)^2=4\ell=4ad$, we have $a=d=\pm \ell^{1/2}$.
  So
  \begin{equation*}
    \delta\geq u(\g z,z)=\f{|az+b-cz^2-dz|^2}{\ell y^2}=\frac{b^2}{\ell y^2}.
  \end{equation*}
  Thus, we have $\#(b)\ll 1+\ell^{1/2}\d^{1/2}y$. And then, we deduce that the number of possibilities for the parabolic matrix
  $\g=\left(\begin{smallmatrix}a&b\\c&d\end{smallmatrix}\right)$
  with $c=0$ is
  \begin{align}\label{c=0}
    \ll (1+\ell^{1/2}\d^{1/2}y) \d_\square(\ell).
  \end{align}

  Now we count the number of matrices
  $\g=\left(\begin{smallmatrix}a&b\\c&d\end{smallmatrix}\right)\in M_2(\mathbb{Z})$
  with $c\neq0$ such that
  \begin{equation*}
    u(\g z,z)=\f{|\g z-z|^2}{\Im (\g z)\Im(z)}\leq \delta,
  \end{equation*}
  and $(a+d)^2=4\ell=4(ad-bc)$.
  We have
  \begin{align*}
    \delta\geq u(\g z,z)\geq \f{(\Im (\g z-z))^2}{\Im (\g z)\Im(z)}=\left(\f{\ell-|cz+d|^2}{\ell^{1/2}|cz+d|}\right)^2
    =\left(\f{\ell^{1/2}}{|cz+d|}-\f{|cz+d|}{\ell^{1/2}}\right)^2,
  \end{align*}
  which implies that
  \begin{align}\label{cz+d}
    |cz+d|=\ell^{1/2}(1+O(\delta^{1/2})).
  \end{align}
  On the other hand, we can also get
  \begin{equation*}
    \delta\geq u(\g z,z)=\f{|az+b-cz^2-dz|^2}{\ell y^2}\geq \f{(\Im (az+b-cz^2-dz))^2}{\ell y^2}
    =\f{(2cx-a+d)^2}{\ell}.
  \end{equation*}
  This deduces that
  \begin{align}\label{a-d}
    \f{|2cx-a+d|}{\ell^{1/2}}\leq \delta^{1/2}.
  \end{align}
  With the help of this and $|a+d|=2\ell^{1/2}$, we obtain
  \begin{align*}
  2||cx+d|-\ell^{1/2}|=||2cx+2d|-|a+d||\leq |2cx-a+d|\leq \ell^{1/2}\delta^{1/2},
  \end{align*}
  which gives
  \begin{align}\label{cx+d}
    |cx+d|=\ell^{1/2}(1+O(\delta^{1/2})).
  \end{align}
  We can estimate $c$ by \eqref{cz+d} and \eqref{cx+d} as follows:
  \begin{align*}
     c^2y^2=|cz+d|^2-|cx+d|^2=\ell(1+O(\delta^{1/2}))^2-\ell(1+O(\delta^{1/2}))^2\ll \ell\delta^{1/2},
  \end{align*}
  and hence
  \begin{align}\label{estimate c}
     c\ll \f{\ell^{1/2}\d^{1/4}}{y}.
  \end{align}
  Combining the above together with \eqref{a-d} and the identity $(a-d)^2+4bc=0$, we show that
  (for any fixed $c\neq0$, we know $a-d$ is divisible by a fixed number at least $c^{1/2}$)
  \begin{align*}
  \sharp(c,a-d)\ll\sum_{c\ll \ell^{1/2}\d^{1/4}y^{-1}}\f{c+\ell^{1/2}\d^{1/2}}{c^{1/2}}
  \ll \ell^{3/4}\d^{3/8}y^{-1/2}.
  \end{align*}
  Finally, since the trace $a+d=\pm2\ell^{1/2}$, the number
  of possibilities for the parabolic matrix
  $\g=\left(\begin{smallmatrix}a&b\\c&d\end{smallmatrix}\right)$
  with $c\neq0$ is
  \begin{align}\label{c neq 0}
  \sharp(c,a-d,a+d)\ll \ell^{3/4}\d^{3/8}y^{-1/2}.
  \end{align}
  We complete this proof by \eqref{c=0} and \eqref{c neq 0}.
\end{proof}


\section{Proof of Theorem \ref{thm: level 1}} \label{sec: thm1}

We first suppose $y\gg1$.
By Lemma \ref{lemma: Bound via Integral}, we have that
\begin{equation}\label{eqn: E to int}
    |E(z,1/2+iT)|^2\ll y\log^6T+\log^5T\int_{|r|\leq 4\log T}|E(z,1/2+iT+ir)|^2 dr.
\end{equation}
On the right hand side above, we dissect the integral into subintervals, each of length $\asymp (\log T)^{-2}$, say.
Let $U$ be one of these intervals, and let $t_U$ be the left endpoint of $U$.
Then by Lemma \ref{lemma: bound for amplier}, we have
\begin{equation}\label{eqn: int with amplier}
  \begin{split}
    & \quad \int_{r\in U} |E(z,1/2+iT+ir)|^2 dr \\
    & \ll N^{-2} \int_{r\in U} |A_N(T+t_U,T+r)|^2 |E(z,1/2+iT+ir)|^2 dr.
  \end{split}
\end{equation}
By \eqref{eqn: Iwaniec-Sarnak} and \eqref{eqn: A_N}, where $x_n$ is defined as in \eqref{eqn: x_n} with $t=T+t_U$, we have
\begin{equation}\label{eqn: int to counting}
  \int_{r\in U} |A_N(T+t_U,T+r)|^2 |E(z,1/2+iT+ir)|^2 dr
  \ll \sum_{\ell} \frac{|y_\ell|}{\sqrt{\ell}} K_{T+t_U}(z,\ell),
\end{equation}
where $y_\ell$ is as in \eqref{eqn: y_ell} and
\begin{equation}
    K_T(z,\ell) := \sum_{\g\in \cM(\ell)} |k_T(\g z,z)| 
         = \int_{0}^1 |k_T(\d)| d M(z,\ell,\d).
\end{equation}
Hence we first need to estimate the quantity
\begin{equation}
  A(z,\d) := \sum_{\ell} \frac{|y_\ell|}{\sqrt{\ell}} M(z,\ell,\d).
\end{equation}
Since $M = M_* + M_u + M_p$, we decompose $A = A_* + A_u + A_p$ accordingly.
By \eqref{eqn: upper bound for y_ell}, we have
\begin{equation}
  A(z,\d) \ll NM(z,1,\d) + \frac{\log^2 N}{N}\underset{\ell_1,\ell_2\in\cP}{\sum\sum} M(z,\ell_1\ell_2,\d).
\end{equation}
Hence together with the fact that $M_u(z,1,\d)=0$ and
Lemmas \ref{lemma: M_*}, \ref{lemma: M_u} and \ref{lemma: M_p},
we get the following proposition.
\begin{proposition}\label{prop: A}
  Let $z=x+iy\in D$, $0<\d<1$. Then
  \begin{equation*}
    \begin{split}
      A_*(z,\d) & \ll (NT)^\ve \le( N + N^2\d^{1/2} + N^3\d \ri), \\
      A_u(z,\d) & \ll (NT)^\ve \le( 1 + N^2\d^{1/2}y \ri), \\
      A_p(z,\d) & \ll (NT)^\ve \le( N + N\d^{1/2}y + N^{3/2}\d^{3/8}y^{-1/2} \ri).
    \end{split}
  \end{equation*}
\end{proposition}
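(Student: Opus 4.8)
The plan is to substitute the bound \eqref{eqn: upper bound for y_ell} on the weights $y_\ell$ directly into the definition of $A(z,\d)$ and then apply the three counting lemmas of Section~\ref{sec: counting} termwise. Since $y_\ell$ is supported on $\ell=1$ (where it is $\ll N$) and on $\ell=\ell_1\ell_2$ with $\ell_1,\ell_2\in\cP$ (where it is $\ll\log^2 N$), we immediately get
\[
  A(z,\d)\ll N\,M(z,1,\d)+\frac{\log^2 N}{N}\underset{\ell_1,\ell_2\in\cP}{\sum\sum}M(z,\ell_1\ell_2,\d),
\]
and the same inequality with $M$ replaced by any of $M_*$, $M_u$, $M_p$. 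So it suffices to estimate the two pieces on the right for each of the three matrix types, keeping in mind that every $\ell$ that occurs satisfies either $\ell=1$ or $\ell\in[N^2,4N^2]$.

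For the generic term I would bound the double sum from above by the full sum $\sum_{1\le\ell\le 4N^2}M_*(z,\ell,\d)$ and invoke Lemma~\ref{lemma: M_*} with $L=4N^2$, which after multiplying by $\log^2 N/N$ and adding the small $\ell=1$ contribution $N\,M_*(z,1,\d)$ recovers $A_*(z,\d)\ll(NT)^\ve(N+N^2\d^{1/2}+N^3\d)$, using $y\gg1$ on $D$ to absorb $N/y$ into $N$. For the upper-triangular term I would use $M_u(z,1,\d)=0$ to discard the $\ell=1$ piece entirely and apply Lemma~\ref{lemma: M_u} with $L=2N$ to the double sum, giving $A_u(z,\d)\ll(NT)^\ve(1+N^2\d^{1/2}y)$ after the factor $\log^2 N/N$.

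The one place that needs a little care is the parabolic term, since $M_p$ vanishes unless $\ell$ is a perfect square: among products $\ell_1\ell_2$ with $\ell_1,\ell_2\in\cP$, the product is a square exactly when $\ell_1=\ell_2$ (both being primes), so the double sum collapses to a single sum over $p\in\cP$, of length $\ll N/\log N$, with $\ell=p^2\asymp N^2$. Feeding $\ell\asymp N^2$ into Lemma~\ref{lemma: M_p} and multiplying by $\log^2 N/N$ contributes $\ll\log N\,(1+N\d^{1/2}y+N^{3/2}\d^{3/8}y^{-1/2})$, while the $\ell=1$ term gives $N\,M_p(z,1,\d)\ll N+N\d^{1/2}y+N\d^{3/8}y^{-1/2}$; combining the two and discarding the dominated terms yields $A_p(z,\d)\ll(NT)^\ve(N+N\d^{1/2}y+N^{3/2}\d^{3/8}y^{-1/2})$. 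No step here is genuinely difficult; the only things to watch are the bookkeeping of which $\d$-power dominates and the check that the $\ell=1$ contributions never exceed the sums over $\cP$.
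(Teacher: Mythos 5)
Your proposal is correct and follows exactly the paper's approach: split $A=A_*+A_u+A_p$, use the support/size of $y_\ell$ from \eqref{eqn: upper bound for y_ell} to reduce to the $\ell=1$ term plus the double sum over $\ell_1,\ell_2\in\cP$, and then apply Lemmas \ref{lemma: M_*}, \ref{lemma: M_u}, \ref{lemma: M_p} termwise (with $M_u(z,1,\d)=0$, $y\gg1$ on $D$, and the observation that for $M_p$ the double sum collapses to $\ell_1=\ell_2$ since $\ell_1\ell_2$ must be a square). The paper states this in two sentences citing the three lemmas; you have merely made the bookkeeping explicit, and the details check out.
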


We now execute the integration over $\d$ in the Stieltjes integral
\begin{equation*}
  \sum_{\ell} \frac{|y_\ell|}{\sqrt{\ell}} K_T(z,\ell) = \int_{0}^{1} |k_T(\d)| d A(z,\d).
\end{equation*}
From Lemma \ref{lemma: int} we see that we can make the substitutions
$(\d\rightsquigarrow T^{1/2})$, $(\d^{1/2}\rightsquigarrow T^{1/2})$, $(\d^{3/8}\rightsquigarrow T^{1/2})$ and $(1\rightsquigarrow T)$ starting
from the upper-bound for $A(z,\d)$ to obtain the bound for the Stieltjes integral as \cite[\S 6]{templier2015hybrid} did.
Here we emphasize that this formula is used with $T+t_U$ in place of $T$, and then one needs to sum up the
resulting bounds over the various subintervals $U\subset[-4\log T, 4\log T]$.
Altogether we obtain from Proposition \ref{prop: A} and after some simplifications that:
\begin{equation}\label{eqn: upper bound for sum over ell}
  \begin{split}
    & \sum_{\ell} \frac{|y_\ell|}{\sqrt{\ell}} K_{T+t_U}(z,\ell) \\
      \ll & (N(T+t_U))^\ve \le( N(T+t_U) + N^3 (T+t_U)^{1/2} + N^2(T+t_U)^{1/2}y \ri)\\
    \ll & (NT)^\ve \le( NT + N^3 T^{1/2} + N^2T^{1/2}y \ri).
  \end{split}
\end{equation}

From the bound via Fourier expansion in Lemma \ref{lemma: Bound via FE}, we can assume without loss of
generality when establishing Theorem \ref{thm: level 1} that
\begin{equation*}
  y\ll T^{1/4}.
\end{equation*}
Combining \eqref{eqn: E to int},  \eqref{eqn: int with amplier}, \eqref{eqn: int to counting},
with \eqref{eqn: upper bound for sum over ell}, we obtain that
\begin{equation}\label{eqn: E^2<<1}
  |E(z,1/2+iT)|^2 \ll (NT)^\ve \left( TN^{-1} + N T^{1/2} + T^{1/2}y \right)
\end{equation}
We choose
\begin{equation}
  N := T^{1/4}.
\end{equation}
This yields
\begin{equation*}
  |E(z,1/2+iT)| \ll T^{3/8+\ve},
\end{equation*}
which implies that
\begin{align}\label{Thm1 y>>1}
E(z,1/2+iT) = y^{1/2+iT}+\varphi(1/2+iT)y^{1/2-iT} + O(T^{3/8+\ve}),
\end{align}
provided $z\in \bigcup_{k\in\mathbb{Z}}(k+D)$.

Now, we consider the case $z\notin \bigcup_{k\in\mathbb{Z}}(k+D)$. By the definition of fundamental domain,
there exists $\gamma=\left(\begin{smallmatrix} a&b\\ c&d \end{smallmatrix}\right)\in SL_2(\mathbb{Z})$,
such that $\gamma z\in D$. We see that $c\neq0$, since $\gamma z\notin D$ if $c=0$.
So we have
$$
\Im \gamma z\ll \frac{y}{c^2y^2}\ll y^{-1}.
$$
With the help of \eqref{Thm1 y>>1}, we get
\begin{align}\label{Thm1 y<<1}
E(z,1/2+iT)=E(\gamma z,1/2+iT)\ll y^{-1/2}+T^{3/8+\ve}.
\end{align}
Based on the above arguments, for any $z\in\bbH$, we obtain
\begin{equation}\label{Thm1}
    E(z,1/2+iT) = y^{1/2+iT}+\varphi(1/2+iT)y^{1/2-iT} + O(y^{-1/2}+T^{3/8+\ve}),
  \end{equation}
as claimed in Theorem \ref{thm: level 1}.

\begin{remark} \label{rmk: Young}
  In Young \cite[Equation (6.3)]{young2015note}, for $y,T\gg1$, he proved that
  \begin{equation}
    |E(z,1/2+iT)|^2 \ll (NT)^\ve \le( TN^{-1} + T^{1/2}(N + N^{1/2}y) \ri).
  \end{equation}
  Compared with \eqref{eqn: E^2<<1}, one can see that the term $T^{1/2}N^{1/2}y$ may be improved to be
  $T^{1/2}y$ by our amplifier and counting method.
  This is the reason why we can improve Young's result slightly.
\end{remark}


\section{Proof of Theorem \ref{thm: level q}} \label{sec: thm2}

Let
  \begin{equation}\label{eqn: F_a}
    F_\fa(z,s)=E_\fa(z,s)-\delta_\fa y^s -\varphi_\fa(s)y^{1-s}.
  \end{equation}
Then, by \eqref{eqn: FE}, \eqref{eqn: FE of E_a}, and \eqref{eqn: E_a to E}, we deduce that
\begin{equation}\label{eqn: F_a to F}
  F_\fa(z,s) = \zeta_q(2s)\mu(v)(qv)^{-s}
     \sum_{\beta|v}\sum_{\gamma|w}\mu(\beta\gamma)\beta^s\gamma^{-s}F(\beta\gamma z,s),
\end{equation}
where $\fa\sim1/v$, $v|q$, and $w=q/v$.
Since $q$ is square-free, for $\Re(s)=1$, we have
\begin{equation}\label{eqn: zeta_q<<}
  \begin{split}
    |\zeta_q(s)| & \leq \prod_{p|q}(1-p^{-1})^{-1} =q\varphi(q)^{-1}\ll \log\log q,
  \end{split}
  \end{equation}
  where $\varphi(q)$ is the Euler function.
By Theorem \ref{thm: level 1}, we have
\begin{equation}\label{eqn: F<<}
  F(z,1/2+iT) \ll y^{-1/2}+T^{3/8+\ve}.
\end{equation}
Hence, we obtain
\begin{equation}\label{eqn: F_a<}
  \begin{split}
     F_\fa(z,1/2+iT) & \ll \frac{\log q}{(qv)^{1/2}}\sum_{\beta|v}\sum_{\gamma|w}\beta^{1/2}\gamma^{-1/2}|F(\beta\gamma z,1/2+iT)| \\
       & \ll \frac{\log q}{(qv)^{1/2}}\sum_{\beta|v}\sum_{\gamma|w}\beta^{1/2}\gamma^{-1/2}((\beta\gamma y)^{-1/2}+T^{3/8+\ve}) \\
       & \ll (qv)^{-1/2+\ve}y^{-1/2}+q^{-1/2+\ve}T^{3/8+\ve}.
  \end{split}
\end{equation}
So, we prove \eqref{eqn: E_a<<}.
In particular, if $\fa\sim\infty$, then since $1/q\sim \infty$ as cusps, we have $v=q$.
And, by \eqref{eqn: F_a<}, for $y\gg1/q$, we obtain
\begin{equation}
  F_\fa(z,1/2+iT) \ll q^{-1/2+\ve}T^{3/8+\ve}.
\end{equation}
Now, Theorem \ref{thm: level q} is proved.

\noindent{\bf Acknowledgment}.
The authors would like to thank Professor Jianya Liu for his constant
encouragement. They would also like to thank the referee for very
useful comments.


\medskip
\bibliographystyle{plain}
\bibliography{bib_sup-norm-revised4}

\end{document}